\documentclass[12pt, reqno]{amsart}
\usepackage[dvipsnames]{xcolor}
\usepackage{mathtools}
\mathtoolsset{showonlyrefs}
\usepackage{etoolbox}
\usepackage{blindtext}
\usepackage{latexsym}
\usepackage[pagewise]{lineno}
\usepackage[utf8]{inputenc}
\usepackage{exscale}
\usepackage{amsmath}
\usepackage{amssymb}
\usepackage{amsfonts}
\usepackage{mathrsfs}
\usepackage{amsbsy}
\usepackage{relsize}
    
    \mathchardef\Re="023C
\mathchardef\Im="023D

\usepackage{comment}
\PassOptionsToPackage{reqno}{amsmath}

\definecolor{mypink1}{rgb}{0.858, 0.188, 0.478}
\definecolor{mypink2}{RGB}{219, 48, 122}
\definecolor{mypink3}{cmyk}{0, 0.7808, 0.4429, 0.1412}
\definecolor{mygray}{gray}{0.6}
\definecolor{venetianred}{rgb}{0.78, 0.03, 0.08}
\definecolor{sapphire}{rgb}{0.03, 0.15, 0.4}
\definecolor{utahcrimson}{rgb}{0.83, 0.0, 0.25}
\definecolor{trueblue}{rgb}{0.0, 0.45, 0.81}
\definecolor{carminered}{rgb}{1.0, 0.0, 0.22}
\definecolor{cobalt}{rgb}{0.0, 0.28, 0.67}
\definecolor{cornflowerblue}{rgb}{0.39, 0.58, 0.93}
\definecolor{darkmagenta}{rgb}{0.55, 0.0, 0.55}
\definecolor{electricultramarine}{rgb}{0.25, 0.0, 1.0}
\definecolor{falured}{rgb}{0.5, 0.09, 0.09}
\definecolor{hanpurple}{rgb}{0.32, 0.09, 0.98}
\definecolor{mahogany}{rgb}{0.75, 0.25, 0.0}
\definecolor{oucrimsonred}{rgb}{0.6, 0.0, 0.0}
\definecolor{persianblue}{rgb}{0.11, 0.22, 0.73}
\definecolor{rufous}{rgb}{0.66, 0.11, 0.03}
\definecolor{uablue}{rgb}{0.0, 0.2, 0.67}
\definecolor{zaffre}{rgb}{0.0, 0.08, 0.66}
\definecolor{carmine}{rgb}{0.59, 0.0, 0.09}

\usepackage[colorlinks, linkcolor=carminered, citecolor=electricultramarine, urlcolor=mahogany, hypertexnames=false]{hyperref}



\usepackage{esint} 
\usepackage{amssymb} 
\usepackage{stmaryrd}
\usepackage{cite} 
\usepackage[top=2.5cm, bottom=2.5cm, left=2.5cm, right=2.5cm]{geometry}
\linespread{1.1}

\def\N{\mathbb{N}}
\def\R{\mathbb{R}}

\def\C{\mathbb{C}}

\numberwithin{equation}{section}

\newtheorem{thm}{Theorem}[section]

\newtheorem{lem}[thm]{Lemma}
\newtheorem{prop}[thm]{Proposition}

\theoremstyle{remark}
\newtheorem{remark}[thm]{Remark}
\newtheorem*{exam*}{Examples}

\numberwithin{equation}{section}

\begin{document}

\title[Non-radial BLOW-UP]{Non-radial Blow-up for a mass-critical fourth-order inhomogeneous  nonlinear Schr\"odinger equation}

\author{Ruobing Bai}
\address{School of Mathematics and Statistics\\
Henan University\\
Kaifeng 475004, China.}
\email{\it{\textcolor{blue}{baimath@hotmail.com}}}
\author{Mohamed Majdoub}
\address{Department of Mathematics, College of Science, Imam Abdulrahman Bin Faisal University, P. O. Box 1982, Dammam, Saudi Arabia.\newline Basic and Applied Scientific Research Center, Imam Abdulrahman Bin Faisal University, P.O. Box 1982, 31441, Dammam, Saudi Arabia.}
\email{\sl {\textcolor{blue}{mmajdoub@iau.edu.sa}}}
\email{\sl {\textcolor{blue}{med.majdoub@gmail.com}}}
\author{Tarek Saanouni}
\address{Department of Mathematics, College of Science, Qassim University, Buraydah, Kingdom of Saudi Arabia.}
\email{\sl{\textcolor{blue}{tarek.saanouni@ipeiem.rnu.tn}}}
\email{\sl{\textcolor{blue}{ t.saanouni@qu.edu.sa}}}
\thanks{}

\subjclass[2020]{Primary: 35Q55; Secondary: 35B44}


\keywords{Inhomogeneous fourth-order nonlinear Schr\"odinger equation, $L^2-$critical, blow-up, localized virial identity, non-radial solutions}

\begin{abstract}\noindent
We investigate the blow-up for a fourth-order Schr\"odinger equation with a mas-critical focusing inhomogeneous nonlinearity. We prove the finite/infinite-time blow-up of non-radial solutions with negative energy. Our result serves as a valuable complement to the existing literature and offers an improvement in our understanding of the subject matter. 
\end{abstract}

\maketitle

\section{Introduction}
\label{S1}

In this work, we are concerned with the initial value problem for the $L^2-$critical inhomogeneous fourth-order Schr\"odinger equation with focusing power-type nonlinearity 
\begin{equation}
\left\{
\begin{array}{ll}
{\rm i}\partial_t u-\Delta^2 u+ \nu \Delta u=-|x|^{-b}|u|^{\frac{8-2b}{N}}u,\\
u(0,x)=u_0(x)\in H^2(\R^N),
\label{BINLS-mu}\tag{IBNLS}
\end{array}
\right.
\end{equation}
where $b>0$ and $\nu\geq 0$.

The fourth-order nonlinear equations of Schr\"odinger type were introduced by Karpman \cite{Karp1996, Karp-1996} and Karpman–Shagalov \cite{KS2000} in the physics literature. In particular, high-order Schr\"odinger type equations can be seen as an interpolation  between Schr\"odinger and relativistic equations. See also \cite{FIP} and the references therein for further interpretations. 

It is worth noting that over the past two decades, there has been a substantial researches dedicated to exploring fourth-order nonlinear Schr\"odinger equations. A comprehensive overview of the advancements in this field can be found in \cite{BKS, Paus1, Paus2, Paus} and the references cited therein.

As for the classical NLS equation, the fourth-order NLS equation \eqref{BINLS-mu} enjoys the conservation of the following quantities called respectively mass and energy 
\begin{eqnarray*}
M[u(t)]&:=&\int_{\R^N}\,|u(t,x)|^2\,dx,\\
E[{u(t)}]&:=&\int_{\R^N}\,\Big(|\Delta u(t,x)|^2+\nu|\nabla u(t,x)|^2\Big)\,dx-\frac{2N}{2N+8-b}\int_{\R^N} |x|^{-b}|u(t,x)|^{1+\frac {8-2b}{N}}dx.
\end{eqnarray*}

In \cite{GP2020,lz}, the authors consider the following inhomogeneous biharmonic Schr\"odinger equation 

\begin{equation}
    \label{IBNLS}
{\rm i}\partial_t u-\Delta^2 u=\pm|x|^{-b}|u|^{p-1}u.    
\end{equation}

They showed the local well-posedness for \eqref{IBNLS} in the energy space. 

\begin{thm}(\cite[Corollary 1.6]{lz})
\label{LWP}
Let $N\geq 1$, $0<b<\min\left\{\frac{N}{2}, 4\right\}$, and \newline$0<p-1<\frac{2(4-b)}{(N-4)_+}$\,\footnote{We use the notation $\kappa_+ :=\max(\kappa,0)$ with the convention $0^{-1}=\infty$.}. If $u_0\in H^2(\R^N)$, then there exists a unique local solution $u\in C([-T,T]; H^2)$ of \eqref{IBNLS} with $u(0)=u_0$, where $T:=T(\|u_0\|_{H^2}, b,N,p)>0$.
\end{thm}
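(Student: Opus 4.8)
The plan is to recast the Cauchy problem \eqref{IBNLS} as a fixed-point equation and solve it by the Banach contraction principle in a Strichartz-type space. Writing $U(t):=e^{-\mathrm{i}t\Delta^2}$ for the free biharmonic propagator, Duhamel's formula yields the equivalent integral equation
\begin{equation}
u(t)=U(t)u_0\pm\mathrm{i}\int_0^t U(t-s)\,F(u(s))\,ds,\qquad F(u):=|x|^{-b}|u|^{p-1}u.
\end{equation}
The analytic backbone is the family of Strichartz estimates for $U(t)$, which stem from the dispersive bound $\|U(t)\|_{L^1\to L^\infty}\lesssim|t|^{-N/4}$. For \emph{biharmonic-admissible} pairs $(q,r)$, namely those with $\frac4q+\frac Nr=\frac N2$ and $2\le q,r\le\infty$, one has the homogeneous estimate $\|U(\cdot)f\|_{L^q_tL^r_x}\lesssim\|f\|_{L^2}$, its dual inhomogeneous counterpart for the Duhamel term, and the $H^2$-lifted versions obtained by commuting $\langle\nabla\rangle^2$ with $U(t)$. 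I would fix two admissible pairs: a high one carrying two derivatives to recover the $H^2$ regularity, and an auxiliary one tailored to absorb the nonlinearity.

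Next I would set up the contraction. A convenient complete metric space is a ball
\begin{equation}
B_{T,M}:=\Big\{u\in C([-T,T];H^2)\cap L^{q}([-T,T];W^{2,r}):\ \|u\|_{X_T}\le M\Big\},
\end{equation}
where $\|\cdot\|_{X_T}$ is the maximum of the $L^\infty_tH^2_x$ norm and the chosen admissible Strichartz norms. Crucially I would equip $B_{T,M}$ with the weaker metric $d(u,v):=\|u-v\|_{L^\infty_tL^2_x}+\|u-v\|_{L^q_tL^r_x}$, so that the contraction estimate for $F(u)-F(v)$ never requires two derivatives on the difference (the map $F$ being only Hölder-, not twice-, differentiable for small $p$). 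One then shows that $\Phi(u)(t):=U(t)u_0\pm\mathrm{i}\int_0^t U(t-s)F(u(s))\,ds$ maps $B_{T,M}$ into itself and is a contraction once $M\sim\|u_0\|_{H^2}$ and $T=T(\|u_0\|_{H^2},b,N,p)$ is chosen small enough.

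The crux, and the main obstacle, is the nonlinear estimate bounding $F(u)$ in the dual Strichartz space by a power of $\|u\|_{X_T}$ times a \emph{positive} power of $T$, the difficulty being that the weight $|x|^{-b}$ is singular at the origin while only decaying at infinity. I would split $\R^N=B_1\cup B_1^c$ into the unit ball and its complement. On $B_1^c$ the weight is bounded and $F$ behaves like a pure power nonlinearity, dispatched by Hölder's inequality together with the Sobolev embeddings $W^{2,r}\hookrightarrow L^\rho$ valid in the admissible range. On $B_1$ I would peel off the weight by Hölder, factoring $\big\||x|^{-b}\big\|_{L^{\gamma}(B_1)}$, which is finite precisely when $b\gamma<N$; this is where the restriction $b<N/2$ enters. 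Combined with the Sobolev constraints imposed on both regions, the ceiling $b<4$ (the fourth-order analogue of the second-order threshold $b<2$) and the range $0<p-1<\frac{2(4-b)}{(N-4)_+}$ emerge as exactly the conditions under which all the exponents involved are simultaneously biharmonic-admissible, Sobolev-compatible, and strictly subcritical, so that a factor $T^{\theta}$ with $\theta>0$ can be extracted.

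Finally, having produced the fixed point $u\in B_{T,M}$, I would upgrade it to $u\in C([-T,T];H^2)$ using the strong continuity of $t\mapsto U(t)u_0$ in $H^2$ and the continuity in $t$ of the Duhamel term granted by the Strichartz bounds, and I would read off uniqueness from the same contraction estimate. The extracted power $T^{\theta}$---a manifestation of the strict subcriticality encoded in the upper bound on $p$---furnishes the local existence time $T=T(\|u_0\|_{H^2},b,N,p)>0$ asserted in the statement.
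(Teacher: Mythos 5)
A preliminary point: the paper does not prove Theorem \ref{LWP} at all --- it is imported verbatim from \cite[Corollary 1.6]{lz} and used as a black box. So the relevant comparison is with the proof in that reference, and that proof is \emph{not} the classical Kato contraction scheme you describe: Liu and Zhang obtain the full range $N\geq 1$, $0<b<\min\{N/2,4\}$, $0<p-1<\frac{2(4-b)}{(N-4)_+}$ by means of bilinear Strichartz-type estimates in Besov spaces, which let them bound the nonlinearity without ever applying derivatives to the singular weight. Your proposal is essentially the earlier strategy of Guzm\'an--Pastor \cite{GP2020}, which is known to yield the result only on a strictly smaller parameter range (in particular it needs $N\geq 3$, with further constraints tying $b$ to $N$); the whole point of \cite{lz} was to remove those restrictions.

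The concrete gap is in the self-mapping estimate, not the contraction estimate. Your weaker-metric device correctly lets you treat $F(u)-F(v)$ without derivatives, but to show $\Phi$ maps $B_{T,M}$ into itself you must still bound $\|\langle\nabla\rangle^{2}F(u)\|$ in a dual Strichartz space, and once the Leibniz rule puts both derivatives on the weight you face the term $|x|^{-b-2}|u|^{p-1}u$. Near the origin this requires $\||x|^{-b-2}\|_{L^{\gamma}(B_1)}<\infty$, i.e.\ $(b+2)\gamma<N$, and H\"older forces $\gamma\geq 1$, so you need $b+2<N$. This fails for \emph{every} admissible $b$ when $N\in\{1,2\}$, and for $b\geq 1$ when $N=3$ --- all cases covered by the theorem. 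No choice of biharmonic-admissible pair can repair this: the obstruction is the local non-integrability of $|x|^{-b-2}$ (indeed the individual Leibniz pieces are not even in $L^1_{loc}$, although $\Delta F(u)$ as a whole is a fine distribution), which is independent of the exponents you are free to tune. Circumventing it is exactly what the Besov-space bilinear estimates of \cite{lz} are for (Hardy-type inequalities such as $\||x|^{-2}v\|\lesssim\|\Delta v\|$ offer an alternative, but only for $N\geq 5$). As written, your argument proves the theorem only in a restricted range of $(N,b)$, not the statement as given.
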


Regarding blow-up phenomena, the case of $b=0$ has been extensively examined in \cite{BL2017} for radially symmetric initial data with negative energy. The study reveals that if $\nu>0$, a finite-time blow-up is guaranteed to transpire. However, when $\nu=0$, the situation becomes more intricate as either a finite-time blow-up or a blow-up at infinity becomes possible outcomes. The case with inhomogeneity, characterized by $b>0$, was addressed in \cite{Dinh} specifically for the scenario where $b=2$ and $\nu>0$. The study demonstrates the occurrence of a finite-time blow-up even in the absence of spherical symmetry assumptions on the initial data, provided that the initial energy is negative. To the best of our knowledge, the case where $\nu=0$ and $b\geq 0$ presents an intriguing question that has yet to be fully addressed. Despite extensive research in the field, a conclusive understanding of the dynamics and behavior of solutions under these conditions remains elusive. \\
In line with the spirit of \cite[Theorem 3, p. 507]{BL2017}, we have successfully derived the following blow-up result.
\begin{thm}
\label{BLOW1}
Let $N\geq 1$, $0<b<\min\left\{\frac{N}{2}, 4\right\}$, $\nu\geq 0$, and $u_0\in H^2(\R^N)$ with $E[u_0]<0$. Let $u\in C([0, T^*); H^2)$ be the maximal solution of \eqref{BINLS-mu}. Then we have the following.
\begin{itemize}
    \item[(i)] If $\nu>0$, then $u(t)$ blows-up in finite time.
    \item[(ii)] If $\nu=0$, then $u(t)$ either blows up in finite or in infinite time in the sense that 
    \begin{equation}
        \label{Blow-infinity}
        \|\Delta u(t)\|\gtrsim  t^{2},\quad\mbox{for}\quad t\gg 1.
    \end{equation}
\end{itemize}
\end{thm}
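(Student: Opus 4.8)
The plan is to adapt the localized virial (Morawetz) method of \cite{BL2017} to the inhomogeneous setting, with the weight $|x|^{-b}$ playing the role that radial symmetry plays when $b=0$. First I would fix a smooth radial cutoff $\phi_R(x)=R^2\phi(x/R)$, where $\phi(r)=r^2$ for $r\le1$, $\phi$ is constant for $r\ge2$, and $\phi''\le2$ everywhere, and introduce the virial functional
\[
\mathcal{M}_R[u]:=2\,\mathrm{Im}\int_{\R^N}\bar u\,\nabla\phi_R\cdot\nabla u\,dx .
\]
Unlike the second-order case, $\mathcal{M}_R$ is used as a stand-alone Morawetz functional and not as $\tfrac{d}{dt}\int\phi_R|u|^2\,dx$, whose further derivative would be a sixth-order quantity under the biharmonic flow. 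Two elementary facts will be used repeatedly: the bound $|\mathcal{M}_R[u]|\lesssim R\,\|u\|\,\|\nabla u\|\lesssim R\,M[u_0]^{3/4}\,\|\Delta u\|^{1/2}$, from Cauchy--Schwarz and $\|\nabla u\|\lesssim\|u\|^{1/2}\|\Delta u\|^{1/2}$, and the conservation of mass and energy.

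Next I would differentiate $\mathcal{M}_R$ along the flow and substitute the equation. Taking first the exact weight $\phi_R(x)=|x|^2$, the commutator of the dilation generator with $-\Delta^2$ produces $16\,\|\Delta u\|^2$ and that with $\nu\Delta$ produces $8\nu\,\|\nabla u\|^2$, while the Pohozaev computation for the nonlinearity gives $-\tfrac{16N}{N+4-b}\int|x|^{-b}|u|^{1+\frac{8-2b}{N}}\,dx$. Mass-criticality is exactly what makes the biharmonic and nonlinear pieces assemble into a multiple of the energy: both $\|\Delta u\|^2$ and $\int|x|^{-b}|u|^{1+\frac{8-2b}{N}}$ scale like $\lambda^4$ under the $L^2$-invariant dilation. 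Using $E[u(t)]\equiv E[u_0]$ one then arrives at
\[
\frac{d}{dt}\mathcal{M}_R[u(t)]=16\,E[u_0]-8\nu\,\|\nabla u(t)\|^2+\mathcal{E}_R(t),
\]
where $\mathcal{E}_R$ gathers every contribution in which $\phi_R$ departs from $|x|^2$; it is supported in $\{|x|\ge R\}$ and involves $\nabla^2\phi_R$, $\nabla^4\phi_R$ and the nonlinearity. Note the favorable sign of the $\nu$-term.

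The main difficulty, and the only place where the absence of symmetry bites, is the estimate of $\mathcal{E}_R$. Here the inhomogeneity is decisive: on $\{|x|\ge R\}$ one has $|x|^{-b}\le R^{-b}$, so the nonlinear part of the error satisfies
\[
\Big|\int_{|x|\ge R}(\cdots)\,|x|^{-b}|u|^{1+\frac{8-2b}{N}}\,dx\Big|\lesssim R^{-b}\int_{\R^N}|u|^{1+\frac{8-2b}{N}}\,dx\lesssim R^{-b}\,M[u_0]^{\gamma}\,\|\Delta u\|^{2-\frac{b}{2}},
\]
the last step by Gagliardo--Nirenberg (for some $\gamma>0$); this gain of $R^{-b}$ is precisely what, for $b=0$, one is forced to extract from the radial Strauss estimate in \cite{BL2017}. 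The quadratic part of $\mathcal{E}_R$ is handled in the same spirit by the kinetic energy on $\{|x|\ge R\}$ together with the bounds on $\phi''_R,\phi''''_R$. Since $2-\tfrac b2<2$, a large choice of $R$ (absorbing, where convenient, a small multiple of $\|\Delta u\|^2$) makes $\mathcal{E}_R$ negligible against the main terms on any interval on which $\|\Delta u\|$ remains bounded.

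Finally I would separate the two regimes. For part (i), $\nu>0$, the coercive term $-8\nu\|\nabla u\|^2$ is the engine: the a priori bound gives $\|\nabla u\|^2\gtrsim\mathcal{M}_R^2/(R^2M[u_0])$, turning the identity into a Riccati-type inequality
\[
\frac{d}{dt}\mathcal{M}_R[u(t)]\le16\,E[u_0]-\frac{c\,\nu}{R^2M[u_0]}\,\mathcal{M}_R[u(t)]^2
\]
for $R$ fixed large. As $E[u_0]<0$, the quantity $\mathcal{M}_R$ becomes negative and is then driven to $-\infty$ in finite time by the quadratic term; since $|\mathcal{M}_R|\lesssim R\,M[u_0]^{3/4}\|\Delta u\|^{1/2}$, this forces $\|\Delta u(t)\|\to\infty$ in finite time, i.e.\ $T^*<\infty$. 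For part (ii), $\nu=0$, the Riccati term is gone and one only has $\frac{d}{dt}\mathcal{M}_R\le16\,E[u_0]<0$, hence $\mathcal{M}_R[u(t)]\le\mathcal{M}_R[u(0)]+16\,E[u_0]\,t$. If $T^*<\infty$ there is nothing to prove; otherwise $|\mathcal{M}_R[u(t)]|\gtrsim|E[u_0]|\,t$ for $t\gg1$, and inserting this into $|\mathcal{M}_R[u(t)]|\lesssim R\,M[u_0]^{3/4}\|\Delta u(t)\|^{1/2}$ yields $\|\Delta u(t)\|^{1/2}\gtrsim t$, that is $\|\Delta u(t)\|\gtrsim t^2$, which is \eqref{Blow-infinity}. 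The crux throughout is the non-radial control of $\mathcal{E}_R$ in the third step, which the factor $|x|^{-b}$ with $b>0$ makes possible.
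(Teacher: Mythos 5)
Your setup (the functional $\mathcal{M}_R$, the identity $\frac{d}{dt}\mathcal{M}_R=16E[u_0]-8\nu\|\nabla u\|^2+\mathcal{E}_R$, the use of $|x|^{-b}\le R^{-b}$ on $\{|x|\ge R\}$, and the endgame in both regimes) coincides with the paper's, but the decisive step---controlling $\mathcal{E}_R$---has a genuine gap. Your plan is to absorb ``a small multiple of $\|\Delta u\|^2$'' and to make $\mathcal{E}_R$ negligible ``on any interval on which $\|\Delta u\|$ remains bounded.'' Neither device is available. The virial derivative of the biharmonic flow contains no negative multiple of $\|\Delta u\|^2$ (this is precisely the structural obstruction the paper emphasizes: BNLS has no variance identity whose second derivative produces $-\|\Delta u\|^2$), so after Young's inequality the nonlinear error $R^{-b}\|\Delta u\|^{2-b/2}$ leaves an $\epsilon\|\Delta u\|^2$ with nowhere to go; nor can a power of $\|\Delta u\|$ be absorbed by $-8\nu\|\nabla u\|^2$, since $\|\nabla u\|$ may remain bounded while $\|\Delta u\|$ grows (for $N\ge 5$ the mass-critical power is $\dot H^1$-supercritical, so a first-order Gagliardo--Nirenberg bound of this term is not available). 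Thus even your part (i) needs an error bound of the form $(R^{-2}+R^{-b})\|\nabla u\|^2$, as in \eqref{zr'2}, rather than one involving $\|\Delta u\|$. The conditional statement is fatal for part (ii): if $\frac{d}{dt}\mathcal{M}_R\le 16E[u_0]<0$ is only known while $\|\Delta u(t)\|\le K$ (with $R=R(K)$), the argument is circular, because the conclusion \eqref{Blow-infinity} concerns exactly the regime where $\|\Delta u\|$ grows without bound. At best a contradiction argument would give $\sup_{t}\|\Delta u(t)\|=\infty$ when $T^*=\infty$, which is strictly weaker than the rate $\|\Delta u(t)\|\gtrsim t^2$; the rate requires the differential inequality to hold unconditionally for all $t<T^*$, as in \eqref{3.35'}.

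The missing ideas are exactly what the paper builds to obtain \eqref{3.35'}. First, the cutoff is not generic but engineered so that $2-\partial_r^2\phi_R$ and $2-\partial_r\phi_R/r$ vanish like $\left(\frac rR-1\right)^{k-1}$ and $\left(\frac rR-1\right)^{k}$ as $r\downarrow R$, with $k$ a large free parameter; this produces the quantitative Hessian-deficit term $\mathcal{R}_{1,0}\le-\sum_i\int_{|x|>R}\Phi_{1,R}|\nabla u_i|^2$ of \eqref{r1}, a negative term quadratic in \emph{second} derivatives of $u$. Second, the weighted interpolation inequality of Lemma \ref{1}, combined with Lemma \ref{Claim} and the exterior Gagliardo--Nirenberg inequality \eqref{GNI-R}, bounds the nonlinear error not by $R^{-b}\|\Delta u\|^{2-b/2}$ but by $R^{-2b/(4-b)}\|\Phi_{2,R}^{2/(4-b)}\Delta u\|^2$ plus decaying remainders, i.e., by a Hessian quantity carrying the same localization weight. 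Third, the pointwise domination \eqref{3225}, namely $C R^{-2b/(4-b)}\Phi_{2,R}^{4/(4-b)}+\cdots\le\Phi_{1,R}$ on $\{r>R\}$, holds for $k>4/b$ and $R$ large because $\Phi_{2,R}^{4/(4-b)}$ vanishes to order $\frac{4(k-1)}{4-b}>k$ near $r=R$, i.e., faster than $\Phi_{1,R}$. With these three ingredients every second-derivative error (including the exterior kinetic terms, after \eqref{GNI-R} and Young) is absorbed pointwise into the deficit term, with no a priori bound on $\|\Delta u\|$, yielding $\mathcal{Z}_R'(t)\le 8E[u_0]<0$ for all $t<T^*$ and hence the $t^2$ rate. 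Without a substitute for this mechanism your $\nu=0$ argument does not close, and your $\nu>0$ argument closes only after replacing the nonlinear error bound by one of the form $R^{-b}\|\nabla u\|^2$.
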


In view of the above result, some comments are in order.

~\begin{itemize}
\item[$\blacktriangleright$]
The main contribution here is to remove the radial assumption in \cite{BL2017}.
\item[$\blacktriangleright$] 
The first part of the above result extends \cite{Dinh} to the case $b\neq2$.
\item[$\blacktriangleright$] 
To the best authors knowledge, the finite-time blow-up for the mass-critical homogeneous biharmonic NLS, namely \eqref{BINLS-mu} for $b=0$ and $\nu\leq0$, is still open even for radial data.
\item[$\blacktriangleright$]
The biharmonic NLS has no variance identity as in the classical NLS. This makes the negative sign of the Morawetz potential term not sufficient to ensure the finite-time blow-up. This is an essential difference between NLS and BNLS.
\item[$\blacktriangleright$]
The proof relies on the assumption that $b$ is greater than zero, and therefore, the aforementioned result cannot be applied to the special scenario when $b$ equals zero.
\item[$\blacktriangleright$]
During the proof, a carefully chosen cut-off function is employed within the localized virial identity, which enables us to effectively manage the remaining terms by utilizing the decay property of the inhomogeneous term $|x|^{-b}$. This approach not only aids in handling the remainder but also assigns a specific sign to the derivative of the virial identity. Ultimately, we arrive at an ordinary differential inequality that possesses solutions which are not globally defined.
\item[$\blacktriangleright$]
In equation \eqref{zr'2}, the case where $\nu=0$ holds special significance due to its delicate nature. When $\nu$ takes on this specific value, the dispersion term $-\nu\|\nabla u\|_{L^2}^2$ disappears entirely. This implies that the contribution of the dispersion effect, which is captured by the gradient of the solution $u$, is absent in the equation. Consequently, the dynamics  is altered, potentially leading to different behavior or properties of the solutions compared to other cases where $\nu$ is nonzero.
\item[$\blacktriangleright$]
The intercritical case has been recently addressed in \cite{rbts}.
\end{itemize}

The outline of the article is as follows. In Section \ref{S2}, we start by introducing some notations, recalling some standard identities followed by several useful estimates and preliminary results. Finally, in Section \ref{S3}, we give the proof of our main result Theorem \ref{BLOW1}. 
\section{Preliminaries}
\label{S2}
\subsection{Notations}
\begin{itemize}
    \item[$\triangleright$] We write $X\lesssim Y$ or $Y\gtrsim X$ to denote the estimate $X\leq CY$ for some constant $C>0$. \item[$\triangleright$] The notation $X\backsim Y$ means that $X\lesssim Y$ and $Y\lesssim X$. 
\item[$\triangleright$] When $\Omega=\R^N$, we abbreviate $L^2(\Omega)$ as $L^2$ and $\|\cdot \|_{L^2(\R^N)}$ as $\|\cdot\|$. 
\item[$\triangleright$] For any $\beta>0$, we use the notation $O(R^{-\beta})$ to denote an infinitely small quantity of the same order as $R^{-\beta}$, that is $\frac{O(R^{-\beta})}{R^{-\beta}}\rightarrow \ell\neq 0$, as $R\rightarrow +\infty$.
\item[$\triangleright$] We define the Fourier transform of a function $f$ by:
\begin{align*}
\widehat{f}(\xi)=\mathscr{F}f(\xi):=\int_{\R^N}e^{-ix\cdot \xi}f(x)dx.
\end{align*}
\item[$\triangleright$] For a complex number $z\in \C$, we use $\Im(z)$ to denote its imaginary part. 
\item[$\triangleright$] Throughout the whole paper, the letter $C$ will denote different positive constants which are not important in our analysis and may vary line by line.
\end{itemize}

\subsection{Useful results}

In this section, we collect some useful tools needed in the proof of our main result. Let us begin with a classical Gagliardo-Nirenberg interpolation inequality.
\begin{prop}
    \label{GNI-Classical}
    The following inequality holds true
    \begin{equation}
        \label{GNI-RN}
        \|\nabla u\|\lesssim \|\Delta u\|^{1/2}\,\|u\|^{1/2}.
    \end{equation}
\end{prop}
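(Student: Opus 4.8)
The plan is to establish \eqref{GNI-RN} by a direct integration-by-parts argument combined with the Cauchy--Schwarz inequality, which in fact yields the inequality with optimal constant $1$. Since the statement is dimension-free and involves only $L^2$-based norms, no sophisticated harmonic analysis is needed; the cleanest route is to first prove the estimate for Schwartz functions $u\in\mathscr{S}(\R^N)$ and then extend it to all of $H^2(\R^N)$ by density, using that $\mathscr{S}(\R^N)$ is dense in $H^2(\R^N)$ and that all three norms $\|u\|$, $\|\nabla u\|$, $\|\Delta u\|$ are continuous with respect to the $H^2$-topology.

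First I would take $u\in\mathscr{S}(\R^N)$ and integrate by parts, moving one derivative from $\nabla u$ onto $\overline{\nabla u}$, so that
\[
\|\nabla u\|^2=\int_{\R^N}\nabla u\cdot\overline{\nabla u}\,dx=-\int_{\R^N}u\,\overline{\Delta u}\,dx.
\]
The rapid decay of Schwartz functions guarantees that the boundary contributions at infinity vanish, so no additional terms appear. Next I would apply the Cauchy--Schwarz inequality to the right-hand side to obtain
\[
\|\nabla u\|^2\leq \int_{\R^N}|u|\,|\Delta u|\,dx\leq \|u\|\,\|\Delta u\|,
\]
and then take square roots to conclude $\|\nabla u\|\leq \|u\|^{1/2}\,\|\Delta u\|^{1/2}$, which is precisely \eqref{GNI-RN} (indeed with constant $1$).

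An equally short alternative would be to argue on the Fourier side: by Plancherel's theorem one has $\|\nabla u\|^2=\int_{\R^N}|\xi|^2|\widehat{u}(\xi)|^2\,d\xi$, and writing $|\xi|^2=|\xi|^2\cdot 1$ and applying Cauchy--Schwarz to the pair $\bigl(|\xi|^2|\widehat u|,\,|\widehat u|\bigr)$ gives $\|\nabla u\|^2\leq\|\Delta u\|\,\|u\|$ directly. Either way, the only point requiring a little care --- and thus the main (mild) obstacle --- is the passage from the Schwartz class to general $u\in H^2(\R^N)$: one must justify that the integration by parts produces no boundary term and that the inequality, once known on the dense subspace, is preserved under $H^2$-limits. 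This is handled by a routine density argument, after which the proof is complete.
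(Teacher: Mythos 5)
Your proof is correct, but it takes a genuinely different route from the paper. The paper also works on the Fourier side, but instead of applying Cauchy--Schwarz it splits the frequency domain into $\{|\xi|\leq M\}$ and $\{|\xi|>M\}$, bounds the two pieces by $M^2\|u\|^2$ and $M^{-2}\|\Delta u\|^2$ respectively, and then optimizes the cutoff $M=\|\Delta u\|^{1/2}\|u\|^{-1/2}$. Your integration-by-parts argument, $\|\nabla u\|^2=-\int u\,\overline{\Delta u}\,dx\leq\|u\|\,\|\Delta u\|$, is shorter and yields the sharp constant $1$ (which the paper's splitting does not, hence its $\lesssim$); your Fourier-side alternative via Cauchy--Schwarz on the pair $\bigl(|\xi|^2|\widehat u|,\,|\widehat u|\bigr)$ achieves the same in one line and, as a bonus, requires no density argument at all, since Plancherel and the relevant integrals make sense directly for $u\in H^2$. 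What the paper's approach buys instead is robustness: the split-and-optimize technique is the standard template for interpolation inequalities where a direct Cauchy--Schwarz is unavailable (uneven exponents, fractional derivatives, $L^\infty$ or Besov-type bounds), so it generalizes more readily even though it is less sharp here. One small remark on your write-up: the density step you flag as the ``main obstacle'' is indeed routine, but it is only needed for the integration-by-parts version; if you lead with the Plancherel version you can dispense with it entirely.
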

\begin{proof}
    For reader's convenience, we give a simple proof here. Using Parseval's identity for the Fourier transform, we write for $M>0$ to be chosen later
   \begin{eqnarray*}
    \|\nabla u\|^2&\lesssim& \int_{\R^N}\,|\xi|^2\,|\widehat{u}(\xi)|^2\,d\xi\\
    &\lesssim&  \int\limits_{|\xi|\leq M}\,|\xi|^2\,|\widehat{u}(\xi)|^2\,d\xi+\int\limits_{|\xi|>M}\,|\xi|^2\,|\widehat{u}(\xi)|^2\,d\xi\\    
    &\lesssim& M^2 \|u\|^2+M^{-2}\|\Delta u\|^2.
   \end{eqnarray*}  
We conclude the proof by choosing 
$M=\|\Delta u\|^{\frac{1}{2}}\;\|u\|^{-\frac{1}{2}}.$
\end{proof}
In general, inequality \eqref{GNI-RN} does not hold for bounded domains $\Omega$ unless $u$ is in $W^{2,2}_0(\Omega)$. The question of whether \eqref{GNI-RN} holds for exterior domains $\Omega$ without assuming that $u$ has zero trace on $\partial\Omega$ was addressed in \cite{CM}.

\begin{prop}(\cite[Theorem 2.1]{CM})
    \label{GNI-Ext}
    Let $\Omega\subset\R^N$ be an exterior domain having the cone property. Then\footnote{For $k\in\N$, the notation $\|\mathbf{D}^k w\|_{L^2(\Omega)}$ stands for $\displaystyle\sup_{|\alpha|=k}\,\|\partial^{\alpha} w\|_{L^2(\Omega)}$.}
    \begin{equation}
        \label{GNI}
\|\mathbf{D}w\|_{L^2(\Omega)}\leq\, C(\Omega)\, \|\mathbf{D}^2w\|_{L^2(\Omega)}^{1/2}\,\|w\|_{L^2(\Omega)}^{1/2}.
    \end{equation}
\end{prop}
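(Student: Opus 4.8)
The plan is to deduce the scale-invariant bound \eqref{GNI} from a family of scale-\emph{dependent} interpolation inequalities, one for each length scale $h>0$, and then to optimize in $h$ — a step that is available precisely because $\Omega$ is unbounded. The starting point is the elementary one-scale estimate on a fixed reference cube $Q$: for every $w\in W^{2,2}(Q)$ and every $\epsilon>0$,
\begin{equation*}
\|\mathbf{D}w\|_{L^2(Q)}^2\leq \epsilon\,\|\mathbf{D}^2 w\|_{L^2(Q)}^2+C_\epsilon\,\|w\|_{L^2(Q)}^2,
\end{equation*}
which holds with no boundary condition imposed on $w$ (a standard Ehrling-type interpolation on a bounded Lipschitz domain, using compactness of $W^{2,2}(Q)\hookrightarrow W^{1,2}(Q)$).

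I would then cover $\Omega$ by cubes $\{Q_h^{(j)}\}_j$ of common side length $h$ with uniformly bounded overlap. The cone property is exactly what guarantees that, for those cubes meeting $\partial\Omega$, the set $Q_h^{(j)}\cap\Omega$ still supports the one-scale estimate with a constant independent of $j$; rescaling the reference inequality to side $h$ converts it into
\begin{equation*}
\|\mathbf{D}w\|_{L^2(Q_h^{(j)}\cap\Omega)}^2\leq C\Big(h^2\,\|\mathbf{D}^2 w\|_{L^2(Q_h^{(j)}\cap\Omega)}^2+h^{-2}\,\|w\|_{L^2(Q_h^{(j)}\cap\Omega)}^2\Big).
\end{equation*}
Summing over $j$ and invoking the bounded overlap yields, for every $h>0$, the global two-term bound
\begin{equation*}
\|\mathbf{D}w\|_{L^2(\Omega)}^2\leq C\Big(h^2\,\|\mathbf{D}^2 w\|_{L^2(\Omega)}^2+h^{-2}\,\|w\|_{L^2(\Omega)}^2\Big).
\end{equation*}

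Finally, I would minimize the right-hand side over $h\in(0,\infty)$: the map $h\mapsto C\|\mathbf{D}^2 w\|_{L^2(\Omega)}^2\,h^2+C\|w\|_{L^2(\Omega)}^2\,h^{-2}$ attains its minimum $2C\,\|\mathbf{D}^2 w\|_{L^2(\Omega)}\,\|w\|_{L^2(\Omega)}$ at $h_\ast=(\|w\|_{L^2(\Omega)}/\|\mathbf{D}^2 w\|_{L^2(\Omega)})^{1/2}$, which is exactly \eqref{GNI} with $C(\Omega)=\sqrt{2C}$. This last step is where unboundedness is indispensable: the optimal scale $h_\ast$ can be arbitrarily large, so the covering must remain valid at every scale, which is possible only because an exterior domain contains cubes of all sizes. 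A mere bounded extension operator $W^{2,2}(\Omega)\to W^{2,2}(\R^N)$ (also available under the cone property) would instead produce \eqref{GNI} with an additive remainder $+\,C\|w\|_{L^2(\Omega)}$, and the scaling optimization is precisely what removes it. Consistently, on a bounded domain the inequality is false — an affine $w$ has $\mathbf{D}^2 w\equiv0$ but $\mathbf{D}w\neq0$ — and such $w$ is automatically excluded here, since it fails to belong to $L^2(\Omega)$.

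I expect the crux to be proving that the constant $C$ in the global two-term bound is genuinely uniform in $h$ over the whole range $(0,\infty)$. At small scales this is the classical role of the cone property, furnishing uniform local Sobolev constants near $\partial\Omega$; at large scales one must verify that the finitely many boundary cubes $Q_h\setminus K$ do not degrade the constant as $h\to\infty$, for which the rescaling $Q_h\setminus K\mapsto Q_1\setminus(h^{-1}K)$ shows the geometry tends to that of a clean fixed cube, with the excised obstacle shrinking to a point. Reconciling the small- and large-scale regimes into a single $h$-independent constant is the technical heart of the argument, and is what \cite[Theorem 2.1]{CM} carries out in detail.
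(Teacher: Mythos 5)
The paper never proves this proposition: it is imported verbatim from \cite[Theorem 2.1]{CM}, and the citation is the entire ``proof''. So the only meaningful comparison is between your argument and that reference, i.e.\ your proposal has to stand on its own. Its skeleton is the right one, and it is indeed the mechanism behind results of this type: establish the two-term bound $\|\mathbf{D}w\|_{L^2(\Omega)}^2\le C\bigl(h^2\|\mathbf{D}^2w\|_{L^2(\Omega)}^2+h^{-2}\|w\|_{L^2(\Omega)}^2\bigr)$ for \emph{every} $h>0$ with $C$ independent of $h$, then optimize in $h$; your explanation of why this is impossible on bounded domains (affine functions) and why unboundedness is essential is also correct.

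The genuine gap is precisely the claim you yourself flag as ``the technical heart'': that the local Ehrling estimate holds on every piece $Q_h^{(j)}\cap\Omega$ with one constant, uniform in the cube index $j$ and, crucially, in the scale $h\in(0,\infty)$. This is not a routine consequence of the cone property, for three concrete reasons. (a) Ehrling's lemma obtained from compactness of $W^{2,2}\hookrightarrow W^{1,2}$ is non-quantitative: it produces some constant for each fixed domain and can never, by itself, yield one constant for an infinite family of distinct domains; you would need a quantitative interpolation inequality whose constant depends only on the cone parameters. (b) A lattice cube crossing $\partial\Omega$ tangentially can make $Q_h^{(j)}\cap\Omega$ cuspidal, so these intersections need not have the cone property at all; the covering near the boundary must be designed, not taken off the shelf. (c) Most seriously, the rescaled boundary pieces $Q_1\cap h^{-1}\Omega=Q_1\setminus h^{-1}K$ do \emph{not} have cone parameters uniform in $h$: scaling shrinks the guaranteed cones to height $O(h^{-1})$, and for obstacles $K$ with narrow curved indentations no larger straight cone exists at points deep inside them, so cone-based constants blow up along this family. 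Your intuition that the obstacle ``shrinks to a point'' is right, but converting it into a uniform constant requires an extra ingredient (a capacity argument, a uniform Poincar\'e inequality on punctured cubes, or a compactness argument in varying domains) that you do not supply --- you defer exactly this to \cite{CM}. Note finally that the all-scales two-term bound is in fact \emph{equivalent} to \eqref{GNI}: one direction is your optimization, the other is Young's inequality $2ab\le h^2a^2+h^{-2}b^2$. So your argument reformulates the theorem rather than proving it, and then cites for the reformulation the same reference the paper cites for the theorem itself; that is a faithful account of how the result works, but not a self-contained proof.
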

\begin{remark}
~{\rm \begin{itemize}
    \item[(i)]  Owing to the cone property of the exterior of an Euclidean ball in $\R^N$, we can deduce from \eqref{GNI} that
\begin{equation}
        \label{GNI-R}
        \|\nabla w\|_{L^2(\Omega_R)}\leq C\,\|\Delta w\|_{L^2(\Omega_R)}^{1/2}\,\|w\|_{L^2(\Omega_R)}^{1/2},
    \end{equation}
    where $R>0$ and $\Omega_R=\left\{\,x\in\R^N;\;\;\; |x|>R\,\right\}.$ 
    
    \item[(ii)] A scaling argument reveals that the constant $C$ in \eqref{GNI-R} is independent of $R$.
    \item[(iii)] A more general statement and other valuable extensions can be found in \cite{CM} and the literature cited therein.
    \end{itemize}}
    \end{remark}
Next, we provide an interpolation estimate in the spirit of \cite[Lemma 2.1]{CF2022}.
\begin{lem}\label{1}
Let $0<b<4$, $N\geq 1$ and $\psi$ be a positive real-valued function.
\begin{enumerate}
\item[1.]
If $N\geq 5$ and $\psi^{\frac{2}{4-b}} \in W^{2,\infty}$, then for all $u\in H^2$ we have
\begin{align}
\int_{\R^N}\psi|u|^{\frac{8-2b}{N}+2}dx\lesssim \left\|\Delta\Big(\psi^{\frac{2}{4-b}}u\Big)\right\|^{\frac{4-b}{2}}\, \|u\|^{\frac{8-2b}{N}+\frac b2}.\label{n=5}
\end{align}
\item[2.] {If $N=4$  and $\psi^{\frac{2}{4-b/2}} \in W^{2,\infty}$, then for all $u\in H^2$ we have
\begin{align}
\int_{\R^N}\psi|u|^{\frac{8-2b}{N}+2}dx
&\lesssim\left\|\psi^\frac2{4-\frac b2}u\right\|_{H^2}^{2-\frac b4}\, \|u\|^{2-\frac b4}.\label{n=4}
\end{align}
}
\item[3.] {If $N=3$ and $\psi^{\frac{2}{4-b}} \in W^{2,\infty}$, then for all $u\in H^2$ we have
\begin{align}
\int_{\R^N}\psi|u|^{\frac{8-2b}{N}+2}dx
&\lesssim\|u\|_{H^2}^{\frac{4-b}{6}}\|\psi^\frac{2}{4-b}u\|_{H^2}^\frac{4-b}{2}\|u\|^2.\label{n=3}
\end{align}
}
\item[4.] 
If $N=2$ and $\psi \in L^{\infty}$, then for all $u\in H^2$ we have
\begin{align}
\int_{\R^N}\psi|u|^{6-b}dx
\lesssim \|\psi\|_{\infty}\|u\|^2\|\nabla u\|^{4-b}.\label{n=2}
\end{align}
\item[5.] 
If $N=1$ and $\psi \in L^{\infty}$, then for all $u\in H^2$ we have
\begin{align}
\int_{\R^N}\psi|u|^{10-2b}dx
&\lesssim\|\psi\|_{\infty}\|u\|^{6-b}\|\nabla u\|^{4-b}.\label{n=1}
\end{align}
\end{enumerate}

\end{lem}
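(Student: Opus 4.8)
The plan is to treat the five cases by a single mechanism: absorb an appropriate power of the weight $\psi$ into the solution, thereby reducing each estimate to a Hölder splitting followed by a Sobolev or Gagliardo--Nirenberg inequality. For $N\neq 4$ I would set $v:=\psi^{\frac{2}{4-b}}u$, and for $N=4$ set $v:=\psi^{\frac{2}{4-b/2}}u$; the hypotheses $\psi^{\frac{2}{4-b}}\in W^{2,\infty}$ (resp. $\psi^{\frac{2}{4-b/2}}\in W^{2,\infty}$) guarantee, via the product rule, that $v\in H^2$ whenever $u\in H^2$, so that every norm on the right-hand sides is finite. The exponent $\frac{2}{4-b}$ is engineered so that raising $v$ to the power $\frac{4-b}{2}$ recovers exactly one factor of $\psi$: indeed $|v|^{\frac{4-b}{2}}=\psi\,|u|^{\frac{4-b}{2}}$, whence the integrand factorizes as
\begin{equation*}
\psi\,|u|^{\frac{8-2b}{N}+2}=|v|^{\frac{4-b}{2}}\,|u|^{\frac{8-2b}{N}+\frac b2},
\end{equation*}
the companion power $\frac{8-2b}{N}+\frac b2$ being forced by matching the total homogeneity in $u$.

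For $N\geq 5$ I would apply Hölder's inequality to this product with the conjugate pair $(s,s')$ determined by requiring the $u$-factor to collapse to an $L^2$ norm, i.e. $\big(\tfrac{8-2b}{N}+\tfrac b2\big)s=2$. A short computation gives $\tfrac1s=\tfrac{16+b(N-4)}{4N}$ and $\tfrac1{s'}=\tfrac{(N-4)(4-b)}{4N}$, both lying in $(0,1)$ because $0<b<4<N$. The point of this choice is that the resulting Lebesgue exponent on $v$ is
\begin{equation*}
\tfrac{4-b}{2}\,s'=\tfrac{2N}{N-4},
\end{equation*}
which is precisely the critical exponent for the homogeneous embedding $\dot H^2(\R^N)\hookrightarrow L^{\frac{2N}{N-4}}(\R^N)$. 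Hence $\|v\|_{L^{2N/(N-4)}}\lesssim\|\Delta v\|$, and combining this with $\|u\|_{L^2}^{2/s}=\|u\|^{\frac{8-2b}{N}+\frac b2}$ yields \eqref{n=5}.

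The lower-dimensional cases are variations forced by the failure of the critical $\dot H^2$ embedding. For $N=4$ the target exponent $\frac{2N}{N-4}$ degenerates, so I would replace it by the subcritical embedding $H^2(\R^4)\hookrightarrow L^q$ valid for every finite $q$; with the weight power $\frac{2}{4-b/2}$ the integrand becomes $|v|^{2-\frac b4}|u|^{2-\frac b4}$, and Hölder with the pair making the $u$-factor an $L^2$ norm produces a finite Lebesgue exponent on $v$, giving \eqref{n=4}. For $N=3$, where $H^2(\R^3)\hookrightarrow L^\infty$, I would instead pull out $\|v\|_{L^\infty}^{\frac{4-b}{2}}\lesssim\|v\|_{H^2}^{\frac{4-b}{2}}$ from the factor $|v|^{\frac{4-b}{2}}=\psi|u|^{\frac{4-b}{2}}$, leaving $\int|u|^{\frac{16-b}{6}}$; writing $|u|^{\frac{16-b}{6}}=|u|^2\,|u|^{\frac{4-b}{6}}$ and estimating $\|u\|_{L^\infty}^{\frac{4-b}{6}}\lesssim\|u\|_{H^2}^{\frac{4-b}{6}}$ delivers \eqref{n=3}. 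Finally, for $N=1,2$ the weight enters only through $\|\psi\|_\infty$, which I pull out at once; the remaining integrals are controlled by the first-order Gagliardo--Nirenberg inequalities $\|u\|_{L^{6-b}}^{6-b}\lesssim\|u\|^2\|\nabla u\|^{4-b}$ and $\|u\|_{L^{10-2b}}^{10-2b}\lesssim\|u\|^{6-b}\|\nabla u\|^{4-b}$, whose exponents are fixed by scaling, yielding \eqref{n=2} and \eqref{n=1}; here the constraints $6-b\geq2$ and $10-2b\geq2$, i.e. $b\leq4$, guarantee that these inequalities are admissible.

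I expect the main obstacle to be purely the exponent bookkeeping: the whole argument hinges on the algebraic fact that the weight power $\frac{2}{4-b}$ (resp. $\frac{2}{4-b/2}$) is tuned so that, after Hölder, $v$ lands exactly on the critical Sobolev exponent. Verifying this identity, and correctly diagnosing where the critical embedding degenerates ($N=4$) or reverses into an $L^\infty$ bound ($N\leq3$), is what dictates the fourfold case split; once the embedding to be used is pinned down in each range, the remaining manipulations are routine applications of Hölder and Gagliardo--Nirenberg.
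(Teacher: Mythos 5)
Your proposal is correct and follows essentially the same route as the paper's proof: the same substitution $v=\psi^{\frac{2}{4-b}}u$ (resp. $\psi^{\frac{2}{4-b/2}}u$ for $N=4$), the same Hölder splitting that collapses the companion factor to an $L^2$ norm, and the same dimensional case analysis driven by $\dot H^2\hookrightarrow L^{\frac{2N}{N-4}}$ for $N\geq 5$, $H^2\hookrightarrow L^q$ ($q<\infty$) for $N=4$, $H^2\hookrightarrow L^\infty$ for $N=3$, and first-order Gagliardo--Nirenberg for $N=1,2$. Your exponent bookkeeping ($\tfrac1s=\tfrac{16+b(N-4)}{4N}$, $\tfrac{4-b}{2}s'=\tfrac{2N}{N-4}$, and the endpoint constraints) matches the paper's computations exactly.
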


\begin{proof}

By H\"older's inequality, we have
\begin{align*}
\int_{\R^N}\psi|u|^{\frac{8-2b}{N}+2}dx&=\int_{\R^N}\psi|u|^{2-\frac b2}|u|^{\frac{8-2b}{N}+\frac b2}dx\\
&\lesssim \|\psi|u|^{2-\frac b2}\|_{L^\frac{2}{2-(\frac{8-2b}{N}+\frac b2)}}\||u|^{\frac{8-2b}{N}+\frac b2}\|_{L^\frac{2}{\frac {8-2b}{N}+\frac b2}}\\
&=\|\psi^{\frac {2}{4-b}}u\|_{L^\frac{2N}{N-4}}^{\frac{4-b}{2}}\|u\|^{\frac{8-2b}{N}+\frac b2},
\end{align*}
provided that 
\begin{align}
0<b\leq4,\quad N\geq5,\quad 0<\frac {8-2b}{N}+\frac b2<2.\label{n5}
\end{align}
Clearly, \eqref{n5} is satisfied for $N\geq5$ and $0<b<4$. Thus, using Sobolev embedding, we get
\begin{align*}
\int_{\R^N}\psi|u|^{\frac{8-2b}{N}+2}dx
&\lesssim\|\Delta(\psi^{\frac {2}{4-b}}u)\|^{\frac{4-b}{2}}\|u\|^{\frac{8-2b}{N}+\frac b2}\\
&\lesssim \Big(\|\Delta(\psi^{\frac{2}{4-b}})u\|+\|\nabla(\psi^{\frac{2}{4-b}})\cdot\nabla u\|+\|\psi^{\frac{2}{4-b}}\Delta u\|\Big)^{\frac{4-b}{2}}\|u\|^{\frac{8-2b}{N}+\frac b2}.
\end{align*}
This finishes the proof of \eqref{n=5}.

Suppose now that {$N=4$. Again, by using  H\"older's inequality and Sobolev embedding, we infer
\begin{align*}
\int_{\R^N}\psi|u|^{\frac{8-2b}{N}+2}dx
&=\int_{\R^N}\Big(\psi^\frac1{2-\frac b4}|u|\Big)^{2-\frac b4}|u|^{2-\frac b4}dx\\
&\leq\left\|\Big(\psi^\frac1{2-\frac b4}|u|\Big)^{2-\frac b4}\right\|_{L^{\frac8b}}\|u\|^{2-\frac b4}\\
&\leq\left\|\psi^\frac1{2-\frac b4}u\right\|_{L^{\frac{2(8-b)}b}}^{2-\frac b4}\|u\|^{2-\frac b4}\\
&\lesssim\|\psi^\frac2{4-\frac b2}u\|_{H^2}^{2-\frac b4}\|u\|^{2-\frac b4}.
\end{align*}
Arguing as above, we handle the case $N=3$ as follows:
\begin{align*}
\int_{\R^N}\psi|u|^{\frac{8-2b}{N}+2}dx
&\leq\left\|\psi|u|^\frac{8-2b}{N}\right\|_{L^\infty}\|u\|^2\\
&\leq\|u\|_{L^\infty}^{\frac{4-b}{6}}\left\|\psi^\frac{2}{4-b}u\right\|_{L^\infty}^\frac{4-b}{2}\|u\|^2\\
&\lesssim\|u\|_{H^2}^{\frac{4-b}{6}}\, \left\|\psi^\frac{2}{4-b}u\right\|_{H^2}^\frac{4-b}{2}\|u\|^2.
\end{align*}
}
Now, if $N=2$, it is sufficient to use H\"older's inequality and Sobolev embedding as follows
\begin{align*}
\int_{\R^N}\psi|u|^{6-b}dx\lesssim \|\psi\|_{\infty}\|u\|_{6-b}^{6-b}
\lesssim \|\psi\|_{\infty}\|u\|^2\|\nabla u\|^{4-b}.
\end{align*}
Finally, if $N=1$, it is sufficient to use H\"older's inequality and Sobolev embedding as above.

The proof of Lemma \ref{1} is completed.
\end{proof}
\subsection{Localized virial Identity}
Let $\varphi : \R^N\to \R$ be a cut-off function. The virial quantity associated to a $H^2-$ solution of \eqref{BINLS-mu} is defined as
\begin{equation}
\label{Vir}
\mathcal{Z}_\varphi(t):= \Im\left(\int_{\R^N} \nabla\varphi\cdot\nabla u(t)\bar u(t)dx\right).
\end{equation}
We have the following virial identity.
\begin{lem}
\label{ZR-deriv}
Let $N\geq 3$ and $\nu\geq 0$. Suppose that $u\in C([0,T); H^2)$ is a solution of \eqref{BINLS-mu}. Then, for any $t\in [0,T)$, we have
\begin{align}
\frac{d}{dt}\mathcal{Z}_\varphi(t)
=&-4\sum_{j,k=1}^N\int_{\R^N}\partial_{jk}\Delta\varphi\partial_ju\partial_k\bar udx+\int_{\R^N}\Delta^3\varphi|u|^2dx+8\sum_{i,j,k=1}^N\int_{\R^N}\partial_{jk}\varphi\partial_{ik}u\partial_{ij}\bar udx\nonumber\\
&-2\int_{\R^N}\Delta^2\varphi|\nabla u|^2dx-\nu\int_{\R^N}\Delta^2\varphi|u|^2dx+4\nu\sum_{j,k=1}^N\int_{\R^N}\partial_{jk}\varphi\partial_ju\partial_k\bar udx\label{382}\\
&-\frac{8-2b}{N+4-b}\int_{\R^N}\Delta\varphi|x|^{-b}|u|^{\frac{8-2b}{N}+2}dx+\frac{2N}{N+4-b}\int_{\R^N}\nabla\varphi\cdot\nabla(|x|^{-b})|u|^{\frac{8-2N}{N}+2}dx.\nonumber
\end{align}

\end{lem}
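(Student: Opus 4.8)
The plan is to differentiate $\mathcal{Z}_\varphi$ directly, insert the equation \eqref{BINLS-mu}, and reduce everything to integrations by parts. Note first that every term on the right-hand side of \eqref{382} involves at most two derivatives of $u$ paired with bounded derivatives of the fixed cut-off $\varphi$, so each is a continuous functional on $H^2$; moreover, with $N\geq 3$ and $0<b<N/2$, the inhomogeneous weight $|x|^{-b}$ and its gradient are locally integrable against the smooth energy density, so the nonlinear terms and their integrations by parts are well defined. I would therefore first prove \eqref{382} for smooth, rapidly decaying solutions (obtained by a standard approximation of the data within the local theory), where all manipulations below are legitimate and no boundary terms arise, and then extend to general $u\in C([0,T);H^2)$ by density together with the continuity in $H^2$ of both sides.

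Setting $p:=\frac{8-2b}{N}$, equation \eqref{BINLS-mu} reads $\partial_t u=-i\Delta^2u+i\nu\Delta u+i|x|^{-b}|u|^{p}u$. Differentiating \eqref{Vir} in $t$, inserting this expression together with its conjugate, and using $\Im(iz)=\Re(z)$, I would split $\frac{d}{dt}\mathcal{Z}_\varphi=\mathrm{BH}+\mathrm{D}+\mathrm{NL}$ into a biharmonic, a dispersive, and a nonlinear contribution, namely
\[
\mathrm{BH}=\Re\int_{\R^N}\Big(\nabla\varphi\cdot\nabla u\,\Delta^2\bar u-\bar u\,\nabla\varphi\cdot\nabla\Delta^2u\Big)\,dx,
\]
\[
\mathrm{D}=\nu\,\Re\int_{\R^N}\Big(\bar u\,\nabla\varphi\cdot\nabla\Delta u-\nabla\varphi\cdot\nabla u\,\Delta\bar u\Big)\,dx,
\]
each being the real part of a commutator of the first-order operator $\nabla\varphi\cdot\nabla$ with $\Delta^2$, respectively $\Delta$.

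For the nonlinear contribution, the two terms produced by $\partial_t u$ and $\partial_t\bar u$ have opposite real parts that cancel, leaving the single divergence-type term
\[
\mathrm{NL}=\int_{\R^N}\nabla\varphi\cdot\nabla\big(|x|^{-b}|u|^{p}\big)\,|u|^2\,dx.
\]
I would then integrate by parts after writing $\nabla\big(|x|^{-b}|u|^{p}\big)\,|u|^2=|u|^{p+2}\nabla(|x|^{-b})+\frac{p}{p+2}\,|x|^{-b}\,\nabla(|u|^{p+2})$ and using $\nabla\cdot(|x|^{-b}\nabla\varphi)=\nabla(|x|^{-b})\cdot\nabla\varphi+|x|^{-b}\Delta\varphi$. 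Recalling $p+2=\frac{2(N+4-b)}{N}$, this reproduces the last two terms of \eqref{382}, the one carrying $\Delta\varphi\,|x|^{-b}|u|^{\frac{8-2b}{N}+2}$ and the one carrying $\nabla\varphi\cdot\nabla(|x|^{-b})\,|u|^{\frac{8-2b}{N}+2}$.

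The dispersive part $\mathrm{D}$ is the classical second-order virial commutator: integrating by parts and using $\Re(\bar u\,\Delta u)=\tfrac12\Delta|u|^2-|\nabla u|^2$ turns it into the terms $\sum_{j,k}\int\partial_{jk}\varphi\,\partial_ju\,\partial_k\bar u\,dx$ and $\int\Delta^2\varphi\,|u|^2\,dx$ of \eqref{382}. The genuine difficulty, and the step I expect to be the main obstacle, is the biharmonic term $\mathrm{BH}$: one must integrate by parts repeatedly to move the four derivatives of $\Delta^2$ off $u$ and onto $\varphi$ and the surviving lower-order factors, then regroup the outcome according to how many derivatives land on $\varphi$. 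This is how the four contributions $\partial_{jk}\Delta\varphi\,\partial_ju\,\partial_k\bar u$, $\Delta^3\varphi\,|u|^2$, $\partial_{jk}\varphi\,\partial_{ik}u\,\partial_{ij}\bar u$, and $\Delta^2\varphi\,|\nabla u|^2$ of \eqref{382} are produced; exploiting the symmetry of mixed partials to collapse the many intermediate expressions, and tracking their combinatorial coefficients, is the delicate part. Summing $\mathrm{BH}+\mathrm{D}+\mathrm{NL}$ then yields \eqref{382}.
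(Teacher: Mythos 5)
The paper does not actually prove this lemma --- it cites \cite[Lemma 2.2]{Dinh} for $b=0$ and \cite[Lemma 3.3]{Saan} for $b>0$ --- and those references proceed exactly along the route you describe: differentiate the virial quantity, insert the equation, and evaluate the resulting commutator terms by integration by parts. So your strategy is the correct and standard one, and your setup (the splitting $\mathrm{BH}+\mathrm{D}+\mathrm{NL}$, the cancellation leaving $\mathrm{NL}=\int\nabla\varphi\cdot\nabla\big(|x|^{-b}|u|^{\frac{8-2b}{N}}\big)|u|^2dx$, the approximation argument) is sound. The genuine gap is that you never execute the step that constitutes the lemma: the biharmonic commutator $\mathrm{BH}$, which is responsible for four of the eight terms on the right of \eqref{382} together with their coefficients $-4$, $+1$, $+8$, $-2$. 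You say these terms ``are produced'' by repeated integration by parts and that tracking the combinatorial coefficients ``is the delicate part''; but those coefficients are precisely what the blow-up argument of Section \ref{S3} consumes (the sign analysis built on $\Phi_{1,R}$ and $\Phi_{2,R}$ depends on them), and nothing in your text determines them. Deferring the one nontrivial computation of an identity-lemma leaves the identity unproved; the references organize this computation by expanding the commutator of $\Delta^2$ with the symmetrized operator $\nabla\varphi\cdot\nabla+\nabla\cdot\nabla\varphi$ term by term, and that is the work that must actually be done.

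There is a second, more concrete problem, which you would have discovered had you pushed your own $\mathrm{NL}$ computation to the end. The paper's normalizations conflict: \eqref{Vir} defines $\mathcal{Z}_\varphi$ without a factor $2$, while \eqref{381} sets $\mathcal{Z}_{\phi_R}=2\Im\big(\int\nabla\phi_R\cdot\nabla u\,\bar u\,dx\big)$; the identity \eqref{382} is the one matching the factor-$2$ normalization (the convention of \cite{BL2017,Dinh}). Starting from \eqref{Vir} as you do, your decomposition and integrations by parts yield
\begin{equation}
\mathrm{NL}=\frac{N}{N+4-b}\int_{\R^N}\nabla\varphi\cdot\nabla(|x|^{-b})\,|u|^{\frac{8-2b}{N}+2}\,dx-\frac{4-b}{N+4-b}\int_{\R^N}\Delta\varphi\,|x|^{-b}\,|u|^{\frac{8-2b}{N}+2}\,dx,
\end{equation}
which is one half of the last two terms of \eqref{382}; likewise the dispersive part comes out as $2\nu\sum_{j,k}\int_{\R^N}\partial_{jk}\varphi\,\partial_j u\,\partial_k\bar u\,dx-\tfrac{\nu}{2}\int_{\R^N}\Delta^2\varphi\,|u|^2\,dx$, half of the $\nu$-terms in \eqref{382}. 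So your claim that the computation ``reproduces'' the corresponding terms of \eqref{382} is false for the definition you adopted: a complete proof must either start from the factor-$2$ definition in \eqref{381} or establish \eqref{382} with all coefficients halved. That this factor of $2$ went unnoticed is further evidence that the computation was sketched rather than carried out.
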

\begin{remark}
{\rm The proof of Lemma \ref{ZR-deriv} is given in \cite[Lemma 2.2]{Dinh} for $b=0$, and in \cite[Lemma 3.3]{Saan} for $b>0$. The details are therefore omitted.}
\end{remark}
For the sake of clarity and completeness, we recall the following radial identities:
\begin{align}
\nabla_x&=\frac x{r}\partial_r,\label{rd2}\\
\Delta_x&=\partial_r^2+\frac{N-1}r\partial_r,\label{rd3}\\
\partial_j\partial_k&=\Big(\frac{\delta_{jk}}r-\frac{x_jx_k}{r^3}\Big)\partial_r+\frac{x_jx_k}{r^2}\partial_r^2.\label{rd1}
\end{align}

We introduce, for $R>0$, the radial cut-off function  
\begin{equation}
\label{phiR}
\phi_R(r)=R^2\phi\left(\frac rR\right),\; r=|x|,
\end{equation}
where 
$$
\phi(r)=\int_{0}^r\chi(s)ds,
$$
and $\chi : [0,\infty)\to [0,\infty)$ is a smooth function (at least $C^3$) satisfying
\begin{equation*}
 \chi(r)=\left\{
		\begin{array}{ccc}
		 \displaystyle 2r&\text{if} & 0\leq r\leq 1, \\
		2r-2(r-1)^k  &\text{if}& 1<r\leq 1+k^{\frac 1{1-k}},\\
  \chi'(r)<0&\text{if}&1+k^{\frac 1{1-k}}<r<2,\\
  0 &\text{if}& r\geq 2.
		\end{array}
		\right.
\end{equation*}
Here $k\geq 4$ is an integer that will be chosen later (sufficiently large). We summarize some properties of the function $\phi_R$ in the following lemma.
\begin{lem}
\label{phi-R}
We have
\begin{equation} 
  \partial_r\phi_R(r)=2r ,\quad  \partial_r^2\phi_R(r)=2\;\;\mbox{for}\;\; r\leq R,\label{20233211}
\end{equation}
\begin{equation}\label{20233213}
 \partial_r\phi_R(r)-r\partial_r^2\phi_R(r)\geq0,\quad \partial_r\phi_R(r)\leq2r,\quad \partial_r^2\phi_R(r)\leq2,
\end{equation}
\begin{equation}
\label{nabla-phi-R}
    \|\nabla^j \phi_R\|_{L^\infty}\lesssim R^{2-j},\; 0\leq j\leq 6,
\end{equation}
and
\begin{align}
\mbox{supp}\left(\nabla^j \phi_R\right)\subset 
\begin{aligned}\left\{
		\begin{array}{ccc}
		\Big\{\;|x|\leq 2R\;\Big\} &\text{if} & j=1,2, \\
		\Big\{\; R\leq |x|\leq 2R\; \Big\} &\text{if}& 3\leq j\leq 6.
		\end{array}
		\right.  
  \end{aligned}
\end{align}
\end{lem}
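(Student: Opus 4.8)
The plan is to establish the four groups of assertions in turn, all of which reduce to elementary one-variable facts about $\chi$ together with the scaling relation $\phi_R(x)=R^2\phi(|x|/R)$. Writing $s=r/R$ and using $\phi'=\chi$, the chain rule gives $\partial_r\phi_R(r)=R\,\chi(s)$ and $\partial_r^2\phi_R(r)=\chi'(s)$, and more generally $\partial_r^m\phi_R(r)=R^{2-m}\phi^{(m)}(s)$. For $r\le R$ we have $s\le1$, where $\chi(s)=2s$ and $\chi'(s)=2$, which instantly yields $\partial_r\phi_R=2r$ and $\partial_r^2\phi_R=2$, i.e.\ \eqref{20233211}.

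For \eqref{20233213} I would reduce each inequality to a pointwise statement about $\chi$ on $[0,\infty)$ and verify it region by region, with $s_0:=1+k^{1/(1-k)}$ denoting the point where $\chi'(s_0)=0$. The bound $\partial_r\phi_R\le2r$ is the claim $\chi(s)\le2s$, and $\partial_r^2\phi_R\le2$ is $\chi'(s)\le2$; both are immediate on $[0,1]$ and on $(1,s_0]$ (there the correction terms $2(s-1)^k$ and $2k(s-1)^{k-1}$ are nonnegative), while on $(s_0,2)$ one uses that $\chi$ is decreasing, so $\chi(s)\le\chi(s_0)<2s_0\le2s$ and $\chi'<0<2$, and on $[2,\infty)$ everything vanishes. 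The key inequality $\partial_r\phi_R-r\,\partial_r^2\phi_R\ge0$ is equivalent to $g(s):=\chi(s)-s\chi'(s)\ge0$. On $[0,1]$ and $[2,\infty)$ one has $g\equiv0$; on $(1,s_0]$ a direct computation gives the clean factorization
\begin{equation*}
g(s)=-2(s-1)^k+2ks(s-1)^{k-1}=2(s-1)^{k-1}\big((k-1)s+1\big)\ge0;
\end{equation*}
and on $(s_0,2)$ the sign is forced structurally by $\chi\ge0$ and $\chi'<0$, whence $g=\chi-s\chi'\ge0$. This settles \eqref{20233213}.

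For the derivative bounds \eqref{nabla-phi-R} I would exploit the exact scaling. Setting $\Phi(y):=\phi(|y|)$ on $\R^N$, we have $\phi_R(x)=R^2\Phi(x/R)$, hence $\nabla^j\phi_R(x)=R^{2-j}(\nabla^j\Phi)(x/R)$ and therefore $\|\nabla^j\phi_R\|_{L^\infty}=R^{2-j}\|\nabla^j\Phi\|_{L^\infty}$. It then suffices to check that $\|\nabla^j\Phi\|_{L^\infty}<\infty$ for $0\le j\le6$, a statement independent of $R$. Since $\phi$ is constant on $[2,\infty)$, $\Phi$ is bounded; and since $\phi(s)=s^2$ near $0$, the function $\Phi=|y|^2$ is a smooth polynomial near the origin, so the negative powers of $|y|$ produced when converting radial derivatives into Cartesian ones via \eqref{rd2}--\eqref{rd1} never create a singularity at $0$. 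Away from the origin the only possible loss of regularity is at $|y|=1$, where $\chi$ is exactly $C^{k-1}$ and hence $\phi$ is exactly $C^{k}$; choosing the integer $k$ large enough (say $k\ge6$) guarantees $\Phi\in C^6$ with compactly supported derivatives, so that each $\nabla^j\Phi$ is continuous and bounded.

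Finally, the support statement follows by inspecting where $\phi_R$ is quadratic or constant. On $r\le R$ we computed $\phi_R(r)=r^2=|x|^2$, whose Cartesian derivatives of order $\ge3$ vanish identically; on $r\ge2R$ the function $\phi_R$ is constant, so all of its derivatives vanish. Consequently $\nabla^j\phi_R$ is supported in $\{|x|\le2R\}$ for $j=1,2$ and in $\{R\le|x|\le2R\}$ for $3\le j\le6$, as claimed. The only genuinely delicate points are the sign of $g$ on the transition interval $(s_0,2)$, which I would handle structurally rather than by explicit formulas, and the regularity bookkeeping at $|y|=1$, which dictates how large the parameter $k$ must be taken.
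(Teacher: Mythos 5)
Your proof is correct, and it is worth noting that the paper does not actually prove this lemma: it defers entirely to the reference \cite{Dinh}. Your argument therefore supplies what the paper omits, and it does so by the natural route: the scaling identity $\partial_r^m\phi_R(r)=R^{2-m}\phi^{(m)}(r/R)$ reduces everything to pointwise statements about $\chi$ on the four regions $[0,1]$, $(1,s_0]$, $(s_0,2)$, $[2,\infty)$; the factorization $\chi(s)-s\chi'(s)=2(s-1)^{k-1}\big((k-1)s+1\big)$ on $(1,s_0]$ together with the structural sign argument ($\chi\ge0$, $\chi'<0$) on $(s_0,2)$ is exactly the right way to get the key inequality in \eqref{20233213}, and the Cartesian scaling $\nabla^j\phi_R(x)=R^{2-j}(\nabla^j\Phi)(x/R)$ gives \eqref{nabla-phi-R} cleanly, with the quadratic behavior near the origin ruling out any singularity from the radial-to-Cartesian conversion. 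One point deserves emphasis, since it is a place where your bookkeeping is sharper than the paper's own hypotheses: for the bounds with $j\le 6$ you correctly require $k\ge 6$ (the paper only states $k\ge 4$, chosen ``sufficiently large''), but the same regularity issue arises at the two remaining junctions $s_0=1+k^{1/(1-k)}$ and $s=2$, where the paper postulates only that $\chi$ is ``smooth (at least $C^3$)''. Strictly speaking, $C^3$ regularity of $\chi$ yields only $\phi_R\in C^4$, so \eqref{nabla-phi-R} for $j=5,6$ requires interpreting that hypothesis as $\chi\in C^5$ across those junctions, which your phrase ``the only possible loss of regularity is at $|y|=1$'' implicitly does. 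This is an imprecision inherited from the statement of the lemma and its surrounding construction, not a gap in your argument.
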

The proof of Lemma \ref{phi-R} can be found in \cite{Dinh} for instance.

For later purpose, we define the functions
\begin{align}
\Phi_{1, R}&=8\left(2-\frac{\partial_r\phi_R}{r}\right),\label{fi1r}\\
\Phi_{2, R}&=\frac{2}{N+4-b}\bigg[(4-b)(2-\partial_r^2\phi_R)+(4N-4+b)(2-\frac{\partial_r\phi_R}{r})\bigg].\label{fi2r}
\end{align}
By \eqref{20233211}, we have for $r\leq R$, $\Phi_{1, R}=\Phi_{2, R}=0$.
It is clear that for any $r\geq 0$, 
\begin{align}\label{491}
|\Phi_{2, R}(r)|\lesssim 1.
\end{align}
The following estimates for $\Phi_{2, R}$ are crucial in the proof of our main results.
\begin{lem} 
\label{Claim}
Let $\alpha>0$. Then, we have

\begin{equation}
\label{nabla-Phi-R}
\|\nabla\left(\Phi_{2, R}^{\alpha} \right)\|_{L^\infty}\lesssim\frac 1{R},
\end{equation}
and
   \begin{align}
\|\Delta \left(\Phi_{2, R}^{\alpha} \right) \|_{L^\infty}\lesssim\frac 1{R^2}.\label{clm}
\end{align} 
\end{lem}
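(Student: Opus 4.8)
The plan is to exploit the exact scaling structure of $\Phi_{2,R}$. Since $\partial_r\phi_R(r)=R\,\chi(r/R)$ and $\partial_r^2\phi_R(r)=\chi'(r/R)$, both $2-\partial_r^2\phi_R$ and $2-\partial_r\phi_R/r$ depend only on the single variable $s:=|x|/R$. Hence $\Phi_{2,R}(x)=\Psi(|x|/R)$ for the $R$-independent profile
\begin{equation*}
\Psi(s):=\frac{2}{N+4-b}\Big[(4-b)\big(2-\chi'(s)\big)+(4N-4+b)\big(2-\tfrac{\chi(s)}{s}\big)\Big],
\end{equation*}
and consequently $\Phi_{2,R}^{\alpha}(x)=G(|x|/R)$ with $G:=\Psi^{\alpha}$. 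Writing out the gradient and Laplacian of a radial profile gives $\nabla\big(\Phi_{2,R}^{\alpha}\big)=\tfrac{1}{R}\,G'(|x|/R)\,\tfrac{x}{|x|}$ and $\Delta\big(\Phi_{2,R}^{\alpha}\big)=\tfrac{1}{R^2}G''(|x|/R)+\tfrac{N-1}{|x|}\tfrac{1}{R}G'(|x|/R)$. Thus \eqref{nabla-Phi-R} and \eqref{clm} reduce to the $R$-independent bounds $\|G'\|_{L^\infty}\lesssim1$ and $\|G''\|_{L^\infty}\lesssim1$, plus control of the term $\tfrac{N-1}{|x|R}G'(|x|/R)$.

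I would next record where $G$ is nontrivial. By \eqref{20233211} one has $\chi'(s)=2$ and $\chi(s)/s=2$ for $s\le1$, so $\Psi\equiv0$ on $[0,1]$ and $G$ together with all its derivatives vanishes there; for $s\ge2$ we have $\chi\equiv0$, so $\Psi$ equals the positive constant $\tfrac{16N}{N+4-b}$ and $G'=G''=0$. Hence $G'$ and $G''$ are supported in $\{1\le s\le2\}$, where $s\ge1$. This tames the apparently singular Laplacian term: on the support of $G'$ one has $|x|\ge R$, so $\tfrac{N-1}{|x|R}\big|G'(|x|/R)\big|\le\tfrac{N-1}{R^2}\|G'\|_{L^\infty}$. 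Everything therefore comes down to the boundedness of $G'$ and $G''$ on $[1,2]$.

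On the open interval $(1,2)$ the profile $\Psi$ is $C^{2}$ (as $\chi\in C^3$) and strictly positive: both summands are nonnegative by \eqref{20233213}, while $2-\chi'(s)>0$ for $s>1$, so $\Psi>0$ and $G=\Psi^{\alpha}$ is $C^2$ there; near $s=2$ it glues smoothly to the constant piece since $\Psi(2)>0$. The only genuine difficulty, which is the crux of the argument, is the corner at $s=1$, where $\Psi$ vanishes and $G=\Psi^{\alpha}$ could fail to have bounded derivatives for small $\alpha$. This is precisely what the integer $k$ is for. For $s$ slightly above $1$, $\chi(s)=2s-2(s-1)^{k}$ yields $2-\chi'(s)=2k(s-1)^{k-1}$ and $2-\chi(s)/s=2(s-1)^{k}/s$, so
\begin{equation*}
\Psi(s)=(s-1)^{k-1}\,h(s),\qquad h(s):=\frac{2}{N+4-b}\Big[2k(4-b)+(4N-4+b)\tfrac{2(s-1)}{s}\Big],
\end{equation*}
with $h$ smooth and $h(1)>0$. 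Consequently $G(s)=(s-1)^{\alpha(k-1)}h(s)^{\alpha}$ near $s=1^{+}$, and differentiating twice gives $G'(s)=O\big((s-1)^{\alpha(k-1)-1}\big)$ and $G''(s)=O\big((s-1)^{\alpha(k-1)-2}\big)$.

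To finish, I would choose $k$ sufficiently large (depending on $\alpha$, as the construction permits), for instance $k\ge 1+2/\alpha$, so that $\alpha(k-1)\ge2$. Then $G'$ and $G''$ remain bounded as $s\to1^{+}$ and match continuously the vanishing piece on $[0,1]$, so that $G\in C^1(\R)$ with $G'$ Lipschitz and $\|G'\|_{L^\infty},\|G''\|_{L^\infty}\lesssim1$, the implicit constants depending only on $k,\alpha,N,b$ and not on $R$. Combining this with the two reductions above yields $\|\nabla(\Phi_{2,R}^{\alpha})\|_{L^\infty}\le R^{-1}\|G'\|_{L^\infty}\lesssim R^{-1}$ and $\|\Delta(\Phi_{2,R}^{\alpha})\|_{L^\infty}\le R^{-2}\big(\|G''\|_{L^\infty}+(N-1)\|G'\|_{L^\infty}\big)\lesssim R^{-2}$, which are exactly \eqref{nabla-Phi-R} and \eqref{clm}. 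The main obstacle, and the place where care is genuinely needed, is the regularity of $\Psi^{\alpha}$ at the transition point $s=1$; the power-type vanishing of order $k-1$ there is what makes the choice of a large $k$ indispensable.
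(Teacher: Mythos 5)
Your proof is correct and follows essentially the same route as the paper: both isolate the corner at $|x|=R$ (your $s=1$), factor out the vanishing $(s-1)^{k-1}$ from the profile there, and choose $k$ large depending on $\alpha$ (your $\alpha(k-1)\ge 2$, the paper's $k>2+\tfrac1\alpha$) so that the first and second derivatives of the $\alpha$-power stay bounded, while positivity of the profile on the rest of the annulus $R\le |x|\le 2R$ handles everything else. The only difference is organizational: you rescale once to an $R$-independent profile $G(|x|/R)$, which turns the paper's region-by-region tracking of powers of $R$ into fixed one-variable bounds on $G'$ and $G''$ --- a cleaner bookkeeping of the same estimates.
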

\begin{proof}
The estimates \eqref{nabla-Phi-R} and \eqref{clm} are straightforward for $r\leq R$ or $r\geq 2R$, since in both cases $\Phi_{2, R}$ is constant thanks to  \eqref{20233211}. In the sequel, we focus on the region $R<r<2R$.

$\bullet$ If $R<r\leq R(1+k^{\frac 1{1-k}})$, we have 
\begin{align*}
\partial_r\phi_R=2R\Big[\frac rR-(\frac rR-1)^{k}\Big], \quad\partial_r^2\phi_R=2-2k(\frac rR-1)^{k-1}.
\end{align*}
Thus, we obtain
\begin{align*}
\Phi_{2, R}^{\alpha}=(\frac rR-1)^{\alpha(k-1)}\left(\frac{4}{N+4-b}[k(4-b)+(4N-4+b)(1-\frac Rr)]\right)^{\alpha}.
\end{align*}

Since $1<\frac rR<1+k^{\frac 1{1-k}}<2$, then $\frac 12<\frac Rr<1$. Therefore
\begin{align}
\Big|\frac{N-1}r\partial_r\Phi_{2, R}^{\alpha}\Big|&=\Big|\frac{N-1}r\partial_r\Big[(\frac rR-1)^{\alpha(k-1)}\Big(\frac{4}{N+4-b}[k(4-b)+(4N-4+b)(1-\frac Rr)]\Big)^{\alpha}\Big]\Big|\nonumber\\
&\lesssim \frac 1{R^2}(\frac rR-1)^{\alpha(k-1)-1}+\frac{1}{r^2}(\frac rR-1)^{\alpha(k-1)}\nonumber\\
&\lesssim \frac 1{R^2},\label{-12.9}
\end{align}
where we assume  $k>1+\frac{1}{\alpha}$. This, in particular, gives \eqref{nabla-phi-R}. Similarly as above, we infer
\begin{align}
\Big|\partial_r^2\Phi_{2, R}^{\alpha}\Big|
=&\Big|\partial_r^2\Big[(\frac rR-1)^{\alpha(k-1)}\Big(\frac{4}{N+4-b}[k(4-b)+(4N-4+b)(1-\frac Rr)]\Big)^{\alpha }\Big]\Big|\nonumber\\
\lesssim& \frac 1{R^2}(\frac rR-1)^{\alpha(k-1)-2}+\frac 1{r^2}(\frac rR-1)^{\alpha(k-1)-1}+\frac 1{r^2}(\frac rR-1)^{\alpha(k-1)}\nonumber\\
\lesssim&\frac 1{R^2},\label{02.9}
\end{align}
provided that $k>2+\frac{1}{\alpha}$. Collecting the estimates \eqref{-12.9}-\eqref{02.9} and using \eqref{rd3}, we get
\begin{align*}
|\Delta \Phi_{2, R}^{\alpha} |\lesssim\frac 1{R^2}.
\end{align*}

$\bullet$ If $R(1+k^{\frac 1{1-k}})<r<2R$, we have
$$
\Phi_{2, R}=\frac{2}{N+4-b}\left[(4-b)\left(2-\chi'\left(\frac rR\right)\right)+(4N-4+b)\left(2-\frac R r\chi\left(\frac rR\right)\right)\right].
$$
Since $\chi$ is decreasing on $1+k^{\frac 1{1-k}}<r<2$ , then $\frac R r\chi(\frac rR)\leq 2+2(k-1)k^{\frac{k}{1-k}}$.
Furthermore, one can easily see that
\begin{align}
    |\Phi_{2, R}|\gtrsim 1.\label{2.8}
\end{align}

Combing \eqref{2.8} with \eqref{491}, we get
\begin{align}
|\Phi_{2, R}|\backsim 1.\label{2.9}
\end{align}

Hence, a straightforward computation gives
\begin{align}
\Big|\partial_r\Phi_{2, R}^{\alpha }\Big|&=\left|\partial_r\left(\frac{2}{N+4-b}\left[(4-b)\left(2-\chi'\left(\frac rR\right)\right)+(4N-4+b)\left(2-\frac R r\chi\left(\frac rR\right)\right)\right]\right)^{\alpha}\right|,\nonumber\\
& \lesssim |\Phi_{2, R}|^{\alpha-1}\left(\frac 1R\left|\chi''\left(\frac rR\right)\right|+\frac R{r^2}\left|\chi\left(\frac rR\right)\right|+\frac 1r\left|\chi'\left(\frac rR\right)\right|\right),\nonumber\\
& \lesssim\frac 1R.\label{2.13}
\end{align}

In a similar manner, we infer
\begin{align}
\Big|\partial_r^2\Phi_{2, R}^{\alpha}\Big|
&=\left|\partial_r^2\left(\frac{2}{N+4-b}\left[(4-b)\left(2-\chi'\left(\frac rR\right)\right)+(4N-4+b)\left(2-\frac R r\chi\left(\frac rR\right)\right)\right]\right)^{\alpha}
\right|,\nonumber\\
& \lesssim |\Phi_{2, R}|^{\alpha-2}|\partial_r\Phi_{2, R}^{\alpha}|^2,\nonumber \\&+\nonumber |\Phi_{2, R}|^{\alpha-1}\left(\frac 1{R^2}\left|\chi'''\left(\frac rR\right)\right|+\frac R{r^3}\left|\chi\left(\frac rR\right)\right|+\frac 1{r^2}\left|\chi'\left(\frac rR\right)\right|+\frac 1{rR}\left|\chi''\left(\frac rR\right)\right|\right),\nonumber\\
& \lesssim\frac 1{R^2}.\label{2.14}
\end{align}

Plugging estimates \eqref{2.13} and \eqref{2.14} together, we get
\begin{align*}
|\Delta \Phi_{2, R}^{\alpha}|
\lesssim\frac 1{R^2}.
\end{align*}
This completes the proof of this lemma.
\end{proof}

\section{Proof of Theorem \ref{BLOW1}}
\label{S3}

Define the Morawetz potential
\begin{align}\label{381}
\mathcal{Z}_R(t):=\mathcal{Z}_{\phi_R}(t)=2{\Im}\left(\int_{\R^N} \nabla\phi_R\cdot\nabla u(t)\bar u(t)dx\right).
\end{align}

\begin{prop}
\label{ZR-diffIneq}
We have 
\begin{align}
\mathcal{Z}_R'(t)&\leq 16E[u_0]-8\nu\|\nabla u(t)\|^2+C(R^{-2}+ R^{-b})\|\nabla u(t)\|^2{+O(R^{-2})}.\label{zr'2}
\end{align}
\end{prop}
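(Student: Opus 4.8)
The plan is to specialize the general virial identity \eqref{382} to the radial weight $\varphi=\phi_R$ and to organize the resulting integrals into (a) a ``flat'' part supported on $\{|x|\le R\}$ that reproduces the conserved energy, (b) a collection of sign-definite curvature terms, and (c) genuine remainders supported on the annulus $A_R:=\{R<|x|<2R\}$ that decay in $R$. First I would insert \eqref{phiR} into \eqref{382} and use the radial identities \eqref{rd2}--\eqref{rd1} to rewrite every Hessian contraction $\partial_{jk}\phi_R$, $\partial_{jk}\Delta\phi_R$, $\Delta^2\phi_R$, $\Delta^3\phi_R$ in terms of $\partial_r\phi_R$ and $\partial_r^2\phi_R$. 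On $\{|x|\le R\}$ the relations \eqref{20233211} give $\partial_r\phi_R=2r$ and $\partial_r^2\phi_R=2$, so there $D^2\phi_R=2\,\mathrm{Id}$, $\Delta\phi_R=2N$ and $\nabla\phi_R\cdot\nabla(|x|^{-b})=-2b|x|^{-b}$, while all derivatives of order $\ge 3$ vanish by \eqref{nabla-phi-R}.

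On this region the computation collapses: the Hessian term $8\sum\partial_{jk}\phi_R\,\partial_{ik}u\,\partial_{ij}\bar u$ becomes $16\|\Delta u\|^2$, the term $4\nu\sum\partial_{jk}\phi_R\,\partial_ju\,\partial_k\bar u$ becomes $8\nu\|\nabla u\|^2$, and the two nonlinear integrals combine with coefficient $-\tfrac{2N(8-2b)+4Nb}{N+4-b}=-\tfrac{16N}{N+4-b}$, i.e. exactly the nonlinear part of $16E[u]$. Collecting these I would write $\mathcal Z_R'(t)=16E[u(t)]-8\nu\|\nabla u(t)\|^2+\mathcal R_R(t)$, invoke the conservation laws $E[u(t)]=E[u_0]$ and $\|u(t)\|=\|u_0\|$, and reduce the proof to showing $\mathcal R_R(t)\le C(R^{-2}+R^{-b})\|\nabla u\|^2+O(R^{-2})$, where $\mathcal R_R$ gathers all contributions coming from $A_R$.

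Next I would dispose of the curvature remainders by sign. Using \eqref{rd1}, the deviation $2\,\mathrm{Id}-D^2\phi_R$ equals $(2-\tfrac{\partial_r\phi_R}{r})\,\mathrm{Id}+\tfrac1r(\partial_r\phi_R-r\partial_r^2\phi_R)\tfrac{x\otimes x}{r^2}$, which is positive semidefinite by \eqref{20233213}; hence the Hessian deviations coming from the third and sixth summands of \eqref{382} are $\le 0$ and may be discarded (or retained as a negative $\|D^2u\|_{L^2(A_R)}^2$ reservoir, see below). The purely dispersive remainders—those carrying $\partial_{jk}\Delta\phi_R$, $\Delta^2\phi_R$ and $\Delta^3\phi_R$—are estimated directly: by \eqref{nabla-phi-R} these weights are $O(R^{-2})$, $O(R^{-2})$ and $O(R^{-4})$ on $A_R$, so they contribute $\lesssim R^{-2}\|\nabla u\|_{L^2(A_R)}^2+R^{-2}\|u_0\|^2\lesssim R^{-2}\|\nabla u\|^2+O(R^{-2})$.

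The main obstacle is the nonlinear remainder $\int\Phi_{2,R}|x|^{-b}|u|^{\frac{8-2b}N+2}\,dx$, where $\Phi_{2,R}$ is given by \eqref{fi2r} and is supported on $A_R$. On its support $|x|^{-b}\le R^{-b}$, so it is $\le R^{-b}\int\Phi_{2,R}|u|^{\frac{8-2b}N+2}$, and I would feed this into the interpolation Lemma \ref{1} with $\psi=\Phi_{2,R}$, choosing the case matching $N$. For $N\ge 5$ this produces $R^{-b}\|\Delta(\Phi_{2,R}^{2/(4-b)}u)\|^{(4-b)/2}\|u_0\|^{\frac{8-2b}N+\frac b2}$; expanding the Laplacian by Leibniz and bounding the weight derivatives through Lemma \ref{Claim} (with $\alpha=\tfrac{2}{4-b}$), namely $\|\nabla(\Phi_{2,R}^{\alpha})\|_\infty\lesssim R^{-1}$ and $\|\Delta(\Phi_{2,R}^{\alpha})\|_\infty\lesssim R^{-2}$, the cross and zeroth-order pieces already carry the gains $R^{-1}$, $R^{-2}$ and yield $\lesssim(R^{-b}+R^{-2})\|\nabla u\|^2+O(R^{-2})$ after Young's inequality. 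The delicate piece is $R^{-b}\|\Phi_{2,R}^{2/(4-b)}\Delta u\|^{(4-b)/2}$: here the hypothesis $b>0$ is essential, since it makes the exponent $\tfrac{4-b}{2}<2$ strictly subcritical, so Young's inequality turns it into $\delta\|\Delta u\|_{L^2(A_R)}^2+C_\delta R^{-4}$, and the small critical term $\delta\|\Delta u\|_{L^2(A_R)}^2$ is absorbed by the nonpositive Hessian-deviation reservoir retained above. In dimensions $N=1,2,3,4$ the corresponding parts of Lemma \ref{1} give the remainder directly in terms of $\|\nabla u\|$ or $\|\cdot\|_{H^2}$ norms, and the same Young argument applies with no absorption needed. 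Putting the three groups together and using $E[u(t)]=E[u_0]$ yields \eqref{zr'2}.
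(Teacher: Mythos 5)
Your reduction is sound up to the last step, and it matches the paper's own decomposition: the flat-region computation yielding $16E[u_0]-8\nu\|\nabla u\|^2$, the nonpositivity of the Hessian-deviation terms via \eqref{20233213}, and the $O(R^{-2})$ treatment of the $\partial_{jk}\Delta\phi_R$, $\Delta^2\phi_R$, $\Delta^3\phi_R$ terms reproduce exactly \eqref{zr'} and \eqref{r1nu}. The genuine gap is in your absorption step for the nonlinear remainder. After Young's inequality you discard the weight and claim that $\delta\|\Delta u\|_{L^2(A_R)}^2$ can be absorbed by the ``nonpositive Hessian-deviation reservoir'', which you describe as a negative multiple of $\|\mathbf{D}^2u\|_{L^2(A_R)}^2$. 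But that reservoir is \emph{not} $-c\|\mathbf{D}^2u\|_{L^2(A_R)}^2$ for any fixed $c>0$: as in \eqref{r1}, it equals $-\sum_i\int_{|x|>R}\Phi_{1,R}|\nabla \partial_i u|^2dx$ plus further nonpositive terms, and the weight $\Phi_{1,R}=8\bigl(2-\tfrac{\partial_r\phi_R}{r}\bigr)=16\tfrac{R}{r}\bigl(\tfrac rR-1\bigr)^{k}$ on $R<r\le R(1+k^{\frac{1}{1-k}})$ vanishes to order $k$ at $r=R^{+}$. Since $\Phi_{1,R}$ has no positive lower bound on the support of $\Phi_{2,R}$, no fixed $\delta>0$ gives the pointwise (or integral) domination you need; a function whose second derivatives concentrate near $|x|=R$ defeats it. The absorption can only be run by \emph{keeping} the weight produced by Young's inequality, i.e. $\delta\int\Phi_{2,R}^{4/(4-b)}|\Delta u|^2dx$, and then proving the pointwise comparison $\Phi_{2,R}^{4/(4-b)}\lesssim\Phi_{1,R}$ on $\{|x|>R\}$ --- this is precisely the paper's claim \eqref{3225} with its three-case analysis. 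That comparison holds only because $\Phi_{2,R}$ vanishes to order $k-1$ at $r=R^{+}$, so $\Phi_{2,R}^{4/(4-b)}$ vanishes to order $\tfrac{4(k-1)}{4-b}\ge k$ once $k\ge 4/b$: this is the real (second and decisive) place where $b>0$ enters, and it is why the cut-off $\chi$ carries the parameter ``$k$ sufficiently large''. Your proposal records neither the weight comparison nor the constraint on $k$, so the central step fails as written; the same issue recurs in your $N=3,4$ cases, where the $H^2$ norms in \eqref{n=3}--\eqref{n=4} again produce weighted $\Delta u$ terms.

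It is also worth pointing out that the paper proves this Proposition by a much shorter route: it keeps $\mathcal{R}_{1,\nu}\le 0$ and bounds the nonlinear remainder crudely by \eqref{r2}, $\mathcal{R}_2\le CR^{-b}\|\nabla u\|^2$, deferring the Lemma \ref{1}/Lemma \ref{Claim} machinery with the $\Phi_{1,R}$-absorption to the proof of Theorem \ref{BLOW1}(ii), where it becomes unavoidable because the term $-8\nu\|\nabla u\|^2$ is absent. You have effectively imported that harder argument into the Proposition, and the flaw sits exactly at its crux.
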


\begin{proof}
 Using the estimate \eqref{nabla-phi-R} and the mass conservation, we obtain
\begin{align}
\displaystyle \left|\int_{\R^N}\Delta^2\phi_R|\nabla u|^2\,dx\right|+\left|\int_{\R^N}\partial_{jk}\Delta \phi_R\partial_ju\partial_k\bar u\,dx\right|&\lesssim R^{-2}\|\nabla u(t)\|_{L^2(|x|\geq R)}^2,\label{mr1}\\
\left|\int_{\R^N}(\Delta^3\phi_R)|u|^2\,dx\right|&\lesssim R^{-4}.\label{mr2}
\end{align}
Moreover, using \eqref{rd2} and $\nabla(|x|^{-b})=-b|x|^{-b-2}x$, we infer
 $$\nabla\phi_R\cdot\nabla(|x|^{-b})=-b|x|^{-b}\frac{\partial_r\phi_R}r.$$ Therefore
\begin{align*}
\mathcal{Z}_R'(t)
=&8\sum_{i=1}^N\int_{\R^N}\frac{\partial_r\phi_R}{r}|\nabla u_i|^2dx+8\sum_{i=1}^N\int_{\R^N}(\frac{\partial_r^2\phi_R}{r^2}-\frac{\partial_r\phi_R}{r^3})|x\cdot\nabla u_i|^2 dx\\
&+4\nu\int_{\R^N}\frac{\partial_r\phi_R}{r}|\nabla u|^2dx+4\nu\int_{\R^N}(\frac{\partial_r^2\phi_R}{r^2}-\frac{\partial_r\phi_R}{r^3})|x\cdot\nabla u|^2 dx\\
&+\frac{8-2b}{N+4-b}\int_{\R^N}\Big[-\partial_r^2\phi_R-(N-1+\frac{2Nb}{8-2b})\frac{\partial_r\phi_R}{r}\Big]|x|^{-b}|u|^{\frac{8-2b}{N}+2}dx\\
&-2\int_{\R^N}\Delta^2\phi_R|\nabla u|^2dx+O(R^{-2}).
\end{align*}

Owing to \eqref{mr1}, we infer that 
\begin{align}
\mathcal{Z}_R'(t)\leq 16E[u_0]-8\nu\|\nabla u(t)\|^2+C R^{-2}\|\nabla u(t)\|^2+\mathcal{R}_{1,\nu}+\mathcal{R}_2{+O(R^{-2})},\label{zr'}
\end{align}

where
\begin{align}
\mathcal{R}_{1,\nu}&=8\sum_{i=1}^N\int_{\R^N}(\frac{\partial_r\phi_R}{r}-2)|\nabla u_i|^2dx+8\sum_{i=1}^N\int_{\R^N}(\frac{\partial_r^2\phi_R}{r^2}-\frac{\partial_r\phi_R}{r^3})|x\cdot\nabla u_i|^2 dx\nonumber\\
&+4\nu\int_{\R^N}(\frac{\partial_r\phi_R}{r}-2)|\nabla u|^2dx+4\nu\int_{\R^N}(\frac{\partial_r^2\phi_R}{r^2}-\frac{\partial_r\phi_R}{r^3})|x\cdot\nabla u|^2 dx,\label{r1nu}
\end{align}
and
\begin{align}
\mathcal{R}_2
&=\frac{8-2b}{N+4-b}\int_{\R^N}\Big[\frac{16N}{8-2b}-\partial_r^2\phi_R-(N-1+\frac{2Nb}{8-2b})\frac{\partial_r\phi_R}{r}\Big]|x|^{-b}|u|^{\frac{8-2b}{N}+2}dx\nonumber\\
&=\int_{\R^N}\Phi_{2, R}(x)\,|x|^{-b}\,|u|^{\frac{8-2b}{N}+2}dx
.\label{r3}
\end{align}
 Clearly by \eqref{20233213}, we have $\mathcal{R}_{1,\nu}\leq 0$. Moreover, for later purpose, we also have
\begin{align}
\mathcal{R}_{1,\nu}&\leq-\sum_{i=1}^N\int_{|x|>R}\Phi_{1, R}|\nabla u_i|^2dx-\frac{\nu}{2}\int_{|x|>R}\Phi_{1, R}|\nabla u|^2dx,\label{r1}\\
\mathcal{R}_2&=\int_{|x|>R}\Phi_{2, R}(x)\,|x|^{-b}|u|^{\frac{8-2b}{N}+2}dx
\leq  C R^{-b}\|\nabla u\|^2.\label{r2}
\end{align}

Hence, \eqref{zr'} via \eqref{r1}, \eqref{r2} leads to \eqref{zr'2} as desired.
\end{proof}

Now, based on the above Proposition, we are in a position to give the proof of Theorem \ref{BLOW1}.\\

\noindent{(i)} {\bf The case $\nu>0$.}\\
Taking $R\gg 1$ in \eqref{zr'2}, we get
\begin{align}
\mathcal{Z}_R'(t)&\lesssim E[u_0]-\|\nabla u(t)\|^2.\label{zr'3}
\end{align}

On one hand, integrating in time \eqref{zr'3}, one can choose $T>0$ such that 
\begin{align}
\mathcal{Z}_R(t)&<0,\quad\forall\; t\geq T\label{zr0};\\
\mathcal{Z}_R(t)&=\mathcal{Z}_R(T)+\int_T^t\mathcal{Z}_R'(s)\,ds<-C\int_T^t\|\nabla u(s)\|^2\,ds,\quad\forall\; t>T.\label{zr1}
\end{align}
On the other hand,
\begin{align}
|\mathcal{Z}_R(t)|&\leq \|\nabla\phi_R\|_{L^\infty}\|\nabla u(t)\|\|u(t)\|\lesssim \|\nabla u(t)\|.\label{zr2}
\end{align}
Thus, \eqref{zr1} and \eqref{zr2} imply that
\begin{align}
\mathcal{Z}_R(t)&<-C\int_T^t|\mathcal{Z}_R(s)|^2\,ds,\quad\forall\; t>T.\label{zr3}
\end{align}
Owing to \eqref{zr3}, we see that the function $y(t):=\displaystyle\int_T^t|\mathcal{Z}_R(s)|^2\,ds$ satisfies the differential inequality $y^2\lesssim y'$. Obviously, the above nonlinear differential inequality has no global solutions.  This ends the proof when $\nu>0$.\\

\noindent{(ii)} {\bf The case $\nu=0$.}\\
We will focus on dimensions $N\geq 5$. The proof for dimensions $N\leq4$ follows the same lines by using \eqref{n=1}, \eqref{n=2}, \eqref{n=3} and \eqref{n=4} instead of \eqref{n=5}.
Owing to \eqref{n=5} and using Young's inequality, we infer
\begin{align}
\mathcal{R}_2&\lesssim R^{-b}\int_{|x|>R}\Phi_{2, R}|u|^{\frac{8-2b}{N}+2}dx\nonumber\\
&\lesssim R^{-b/2}\Big(\|\Delta(\Phi_{2, R}^{\frac{2}{4-b}})u\|+\|\nabla(\Phi_{2, R}^{\frac{2}{4-b}})\cdot\nabla u\|+\|\Phi_{2, R}^{\frac{2}{4-b}}\Delta u\|\Big)^{\frac{4-b}{2}} R^{-b/2}\|u\|^{\frac{8-2b}{N}+\frac b2}\nonumber\\
&\lesssim\frac{1}{R^{\frac{2b}{4-b}}}\Big(\|\Delta(\Phi_{2, R}^{\frac{2}{4-b}})u\|+\|\nabla(\Phi_{2, R}^{\frac{2}{4-b}})\|_{L^\infty}\|\nabla u\|_{L^2(|x|\geq R)}+\|\Phi_{2, R}^{\frac{2}{4-b}}\Delta u\|\Big)^2+\frac{1}{R^2}\|u_0\|^{\frac{4(8-2b)}{Nb}+2}.\label{3222}
\end{align}
We will treat the terms in RHS of \eqref{3222} one by one. By \eqref{clm} and the conservation of mass, we have
\begin{align}\label{127-1}
\|\Delta(\Phi_{2, R}^{\frac{2}{4-b}})u\|^2 \lesssim\frac 1{R^4}.
\end{align}
Moreover, as in the proof of  \eqref{-12.9} and \eqref{2.13}, we obtain
\begin{align}\label{3224}
|\nabla(\Phi_{2, R}^{\frac{2}{4-b}})|=|\partial_r(\Phi_{2, R}^{\frac{2}{4-b}})|\lesssim\frac 1{R}.
\end{align}

Hence, by \eqref{GNI-R}, \eqref{3224} and the conservation of mass, we infer 
\begin{align}\label{1272}
\|\nabla(\Phi_{2, R}^{\frac{2}{4-b}})\|_{L^\infty}^2\|\nabla u\|_{L^2(|x|\geq R)}^2&\lesssim
\frac{1}{R^{2}}{\|\nabla u\|_{L^2(|x|\geq R)}^2}\nonumber\\
&\lesssim \frac{1}{R^{2}}+\frac{1}{R^{2}}{\|\nabla u\|_{L^2(|x|\geq R)}^4}\nonumber\\
&\lesssim\frac{1}{R^{2}}+\frac{1}{R^{2}}{\|\Delta u\|_{L^2(|x|\geq R)}^2}.
\end{align}

Further, by \eqref{3222}, \eqref{127-1} and \eqref{1272} and the conservation of mass, we get
\begin{align}
\mathcal{R}_2\lesssim \frac{1}{R^{\frac{2b}{4-b}}}\|\Phi_{2, R}^{\frac{2}{4-b}}\Delta u\|^2+\frac{1}{R^{2+\frac{2b}{4-b}}}\|\nabla u\|_{L^2(|x|\geq R)}^2+\frac{1}{R^2}+\frac{1}{R^{4+\frac{2b}{4-b}}}.\label{2.18}
\end{align}
Furthermore, owing to \eqref{GNI-R} and using  Young's inequality, we obtain
\begin{align}
 CR^{-2}\|\nabla u\|_{L^2(|x|\geq R)}^2
\leq C R^{-2}+CR^{-2}\|\Delta u\|_{L^2(|x|\geq R)}^2.\label{2.20}
\end{align}
Noticing that $|\Delta u|^2\lesssim \sum_{i=1}^N|\nabla u_i|^2$ and owing to \eqref{fi1r}, \eqref{r1}, \eqref{2.18}, and \eqref{2.20}, we have
\begin{align}
&CR^{-2}\|\nabla u\|_{L^2(|x|\geq R)}^2+\mathcal{R}_{1,0}+\mathcal{R}_2\nonumber\\
\leq&\sum_{i=1}^N\int_{|x|>R}\Big(C\frac{1}{R^{\frac{2b}{4-b}}}\Phi_{2, R}^{\frac{4}{4-b}}+C\frac{1}{R^{2+\frac{2b}{4-b}}}+C\frac1{R^6}-\Phi_{1, R}\Big)|\nabla u_i|^2dx\nonumber\\
&+\frac{C}{R^2}+\frac{C}{R^{2+\frac{2b}{4-b}}}+\frac{C}{R^{4+\frac{2b}{4-b}}}.\label{3224'}
\end{align}
{Next, we claim that for any $r>R\gg 1$,
\begin{align}\label{3225}
C\frac{1}{R^{\frac{2b}{4-b}}}\Phi_{2, R}^{\frac{4}{4-b}}+C\frac{1}{R^{2+\frac{2b}{4-b}}}+C\frac1{R^6}-\Phi_{1, R}\leq 0.
\end{align}}
$\bullet$ \underline{\bf First case}: $R<r\leq R(1+k^{\frac{1}{1-k}})$. 
A straightforward computation gives
\begin{align*}
C&\frac{1}{R^{\frac{2b}{4-b}}}\Phi_{2, R}^{\frac{4}{4-b}}(r)-\Phi_{1, R}(r)\nonumber\\
&=C\frac{1}{R^{\frac{2b}{4-b}}}(\frac rR-1)^{\frac{4(k-1)}{4-b}}\Big(\frac{8}{N+4-b}[k(4-b)+(4N-2+b)(1-\frac Rr)]\Big)^{\frac {4}{4-b}}-16\frac Rr(\frac rR-1)^k\nonumber\\
&=(\frac rR-1)^k\Big[C\frac{1}{R^{\frac{2b}{4-b}}}(\frac rR-1)^{\frac{4(k-1)}{4-b}-k}\Big(\frac{8}{N+4-b}[k(4-b)+(4N-2+b)(1-\frac Rr)]\Big)^{\frac {4}{4-b}}-16\frac Rr\Big].
\end{align*}
Since $\frac Rr\geq \frac 12 $, we get
\begin{align}
C&\frac{1}{R^{\frac{2b}{4-b}}}\Phi_{2, R}^{\frac{4}{4-b}}(r)-\Phi_{1, R}(r)\nonumber\\
&\leq(\frac rR-1)^k\Big[C\frac{k^{\frac{k}{k-1}-\frac{4}{4-b}}}{R^{\frac{2b}{4-b}}}\Big(\frac{8}{N+4-b}[k(4-b)+(2N-1+\frac b2)]\Big)^{\frac {4}{4-b}}-\frac {16}{1+k^{\frac{1}{1-k}}}\Big]\nonumber\\
&\leq k^{\frac{k}{1-k}}\Big[C\frac{k^{\frac{k}{k-1}-\frac{4}{4-b}}}{R^{\frac{2b}{4-b}}}\Big(\frac{8}{N+4-b}[k(4-b)+(2N-1+\frac b2)]\Big)^{\frac {4}{4-b}}-\frac {16}{1+k^{\frac{1}{1-k}}}\Big]\label{3226''}\nonumber\\
&\leq C\frac{k^{-\frac{4}{4-b}}}{R^{\frac{2b}{4-b}}}\Big(\frac{8}{N+4-b}[k(4-b)+(2N-1+\frac b2)]\Big)^{\frac {4}{4-b}}-\frac{16k^{\frac{1}{1-k}}}{1+k^{\frac{1}{1-k}}}\nonumber\\
&\leq C\frac{1}{R^{\frac{2b}{4-b}}}\Big(\frac{8}{N+4-b}[(4-b)+(2N-1+\frac b2)]\Big)^{\frac {4}{4-b}}-\frac{16k^{\frac{1}{1-k}}}{1+k^{\frac{1}{1-k}}}.
\end{align}

By taking $R$ large enough, we can obtain
\begin{align}
C\frac{1}{R^{\frac{2b}{4-b}}}\Phi_{2, R}^{\frac{4}{4-b}}(r)-\Phi_{1, R}(r)
\leq -\frac{8k^{\frac{1}{1-k}}}{1+k^{\frac{1}{1-k}}},\label{3226}
\end{align}
and 
\begin{align}\label{942}
C\frac{1}{R^{2+\frac{2b}{4-b}}}+C\frac1{R^6}-\frac{8k^{\frac{1}{1-k}}}{1+k^{\frac{1}{1-k}}}\leq 0.
\end{align}

Hence, by the above estimates \eqref{3226} and \eqref{942}, we get the claim \eqref{3225} in this case.

$\bullet$ \underline{\bf Second case}: $R(1+k^{\frac{1}{1-k}})<r{\leq 2R}$. Since $\chi'<0$, then
\begin{align*}
\frac R r\chi\left(\frac rR\right)\leq \frac{1}{1+k^{\frac{1}{1-k}}}\chi(1+k^{\frac{1}{1-k}})=2-2\frac{k^{\frac{k}{1-k}}}{1+k^{\frac{1}{1-k}}}.
\end{align*}

Hence,
\begin{align}\label{9481}
\Phi_{1, R}(r)=8\left(2-\frac R r\chi\left(\frac rR\right)\right)\geq 16\frac{k^{\frac{k}{1-k}}}{1+k^{\frac{1}{1-k}}}.
\end{align}

Moreover, by \eqref{2.9}, we have $|\Phi_{2, R}|\lesssim 1.$ Therefore, taking $R$ large enough, we infer
\begin{align}\label{3227}
C\frac{1}{R^{\frac{2b}{4-b}}}\Phi_{2, R}^{\frac{4}{4-b}}(r)+C\frac{1}{R^{2+\frac{2b}{4-b}}}+C\frac1{R^6}-\Phi_{1, R}(r)\leq 0.
\end{align}

$\bullet$ \underline{\bf Third case}: $r\geq 2R$. Noting that $\phi_R(r)=0$, we have $\Phi_{1, R}(r)=16$ and $\Phi_{2, R}(r)=\frac{16N}{N+4-b}$. By taking $R$ large enough, we have
\begin{align}\label{461}
C\frac{1}{R^{\frac{2b}{4-b}}}\Phi_{2, R}^{\frac{4}{4-b}}(r)+C\frac{1}{R^{2+\frac{2b}{4-b}}}+C\frac1{R^6}-\Phi_{1, R}(r)\leq 0.
\end{align}
This proves \eqref{3225}.

Now, taking advantage of \eqref{3224'} and \eqref{3225} and choosing $R$ large enough, we get
\begin{equation}
\mathcal{Z}_R'(t)
\leq 16E[u_0]+\frac{C}{R^2}+\frac{C}{R^{2+\frac{2b}{4-b}}}+\frac{C}{R^{4+\frac{2b}{4-b}}}\leq 8E[u_0] <0.\label{3.35'}
\end{equation}

With inequality \eqref{3.35'} established, we can now prove the second part of Theorem \ref{BLOW1}. Assume that $T^* = \infty$. Then, by \eqref{3.35'}, there exists a time $t_1 > 0$ such that$\mathcal{Z}_R(t)<0$ for any $t\geq t_1$. Hence, by the Cauchy-Schwarz inequality via \eqref{381}, and integrating \eqref{3.35'} in $[t_1, t]$,  we get for all $t\geq t_1$,
\begin{align}
-\sqrt{\|\Delta u(t)\|}\lesssim \mathcal{Z}_R(t)\lesssim E[u_0](t-t_1).\label{3.36}
\end{align}

Finally, \eqref{3.36} gives for large time
\begin{equation*}
{\|\Delta u(t)\|}\gtrsim t^2.
\end{equation*}
This completes the proof of the second part of Theorem \ref{BLOW1}.

\vspace{0.3cm}

\hrule

\vspace{0.3cm}

{\noindent {\bf\large Acknowledgements.} {\it The research of R. Bai was partially supported by the NSF of China (Grant No. 12401302), and the Postdoctoral Fellowship Program of CPSF (Grant No. GZC20230694).}

\vspace{0.3cm}

{\noindent{\bf\large Declarations.}}
On behalf of all authors, the corresponding author states that there is no conflict of interest. No data-sets were generated or analyzed during the current study.

\vspace{0.3cm}

\hrule




\begin{thebibliography}{999}
\bibitem{rbts} {R. Bai and T. Saanouni}, {\em Non global solutions for non-radial inhomogeneous nonlinear Schrodinger equations}, Vol. 2025 (2025), No. 55, 1--21.

\bibitem{BKS} M. Ben-Artzi, H. Koch, J.-C. Saut, {\em Dispersion estimates for fourth order
Schr\"odinger equations}, C. R. Acad. Sci. Paris S\'er. I Math., {\bf 330} (2000), 87--92.

\bibitem{BL2017} {T. Boulenger and E. Lenzmann}, {\em Blowup for biharmonic {NLS}}, {Ann. Sci. \'{E}c. Norm. Sup\'{e}r. (4)}, {\bf 50} (2017), {503--544}.

\bibitem{CF2022} { M. Cardoso and L. G. Farah}, {\em Blow-up of non-radial solutions for the $L^2$ critical inhomogeneous NLS equation}, Nonlinearity, {\bf 35} (2022).

\bibitem{CM}{F. Crispo and P. Maremonti}, {\em An interpolation inequality in exterior domains}, {Rend. Semin. Mat. Univ. Padova}, {\bf 112} (2004), {11--39}.
              
\bibitem{Dinh} {V. D. Dinh}, {\em Non-radial finite-time blow-up for the fourth-order nonlinear  Schr\"{o}dinger equations}, {Appl. Math. Lett.}, {\bf 132} (2022), {Paper No. 108084, 9}.

\bibitem{FIP} G. Fibich, B. Ilan and G. Papanicolaou, {\em Self-focusing with fourth-order dispersion}, SIAM J. Appl. Math., {\bf 62} (2002), 1437--1462.

\bibitem{GP2020}{Carlos M. Guzm\'{a}n and A. Pastor}, {\em On the inhomogeneous biharmonic nonlinear {S}chr\"{o}dinger equation: local, global and stability results}, {Nonlinear Anal. Real World Appl.}, {\bf 56} (2020), {103174, 35}.

\bibitem{Karp-1996} {V. I. Karpman}, {\em Stabilization of soliton instabilities by higher-order dispersion: fourth order nonlinear Schr\"odinger-type equations}, Phys. Rev. E, {\bf 53} (1996), 1336--1339.

\bibitem{Karp1996} {V. I. Karpman}, {\em Lyapunov approach to the soliton stability in highly dispersive systems. I. Fourth order nonlinear Schr\"odinger equations},  Physics Letters A, {\bf 215} (1996), 254--256.

\bibitem{KS2000} {V. I. Karpman and A. G. Shagalov}, {\em Stability of solitons described by nonlinear Schr\"odinger-type equations with higher-order dispersion}, Physica D: Nonlinear Phenomena, {\bf 144} (2000), 194--210.

\bibitem{lz} {X. Liu and T. Zhang}, {\em Bilinear Strichartz's type estimates in Besov spaces with application to inhomogeneous nonlinear biharmonic Schr\"odinger equation}, J. Differ. Equ, {\bf 296} (2021), 335--368.

\bibitem{Paus1} {B. Pausader}, {\em Scattering theory for the fourth-order {S}chr\"{o}dinger              equation in low dimensions}, {Nonlinearity}, {\bf 26} (2013), {2175--2191}.

\bibitem{Paus2} {B. Pausader}, {\em The focusing energy-critical fourth-order {S}chr\"{o}dinger              equation with radial data}, {Discrete Contin. Dyn. Syst.}, {\bf 24} (2009), {1275--1292}.

\bibitem{Paus} B. Pausader and S. Shao, {\em The mass-critical fourth-order Schr\"odinger equation in high dimensions}, J. Hyperbolic Differ. Equ., {\bf 7} (2010), 651--705.


\bibitem{Saan} {T. Saanouni}, {\em
Energy scattering for radial focusing inhomogeneous biharmonic Schr\"odinger equations}, Calc. Var., {\bf 60} (2021).


\end{thebibliography}
\end{document}